\definecolor{darkgreen}{rgb}{0,0.51,0.11}
\newtheorem{theorem}{Theorem}[section]
\newtheorem{lemma}[theorem]{Lemma}
\newtheorem{proposition}[theorem]{Proposition}
\newtheorem{corollary}[theorem]{Corollary}
\theoremstyle{definition}
\newtheorem{definition}[theorem]{Definition}
\newtheorem{example}[theorem]{Example}
\theoremstyle{remark}
\newtheorem{remark}[theorem]{Remark}
\numberwithin{equation}{section}
\begin{document}
\title[Square entropy]{Square Entropy and Uniform n-to-1 Bernoulli Transformations}

\author[]{Pouya Mehdipour$^*$}

\address{$^1$Departamento de Matemática,
\newline \indent Universidade Federal de Viçosa}
\email{pouya@ufv.br
%\newline \indent ORCID iD: \url{https://orcid.org/xxxx-xxxx-xxxx-xxxx}
}

\author[]{Somayeh  Jangjooye Shaldehi}

%\author[T.\ Aleksi\'c]{Tatjana Aleksi\'c$^2$}

\address{$^2$Department of Mathematics,
\newline \indent Faculty of Mathematical Sciences,
\newline \indent Alzahra University}
\email{s.jangjoo@alzahra.ac.ir
%\newline \indent ORCID iD: \url{https://orcid.org/0000-0002-0431-7182}
}

%\email{stanicm@kg.ac.rs
%\newline \indent ORCID iD: \url{https://orcid.org/0000-0003-4785-6730}}

% INSERT KEYWORDS AND SUBJECT CLASSIFICATIONS BELOW, PLEASE DO NOT CHANGE THE FORMATTING AND
% IF YOU ARE ONLY USING PRIMARY CLASSIFICATIONS LEAVE THE PRIMARY KEYWORD (THIS IS USED FOR INDEXING)
\keywords{zip shift, intrinsic ergodicity, entropy, Bernoulli transformations.} 
\subjclass{Primary: 37-XX. Secondary: 37A05, 37A35.}

\begin{abstract}
In this paper, we define the so-called square entropy and prove that n-to-1 full zip shift maps are intrinsically ergodic. Furthermore, we show that square entropy characterizes uniform n-to-1 transformations of $(m,l)$-Bernoulli type that are extended Bernoulli transformations.
\end{abstract}

\maketitle

%\section{Introduction}

The concept of intrinsic ergodicity was first established by Parry \cite{11} and Weiss \cite{12,13} for transitive shifts of finite type and all their subshift factors (sofic shifts), and by Bowen \cite{0133} for shifts with the specification property. A natural question in the study of shift spaces is whether these properties imply intrinsic ergodicity. 

In \cite{9}, the authors introduced the concept of  $(m,l)$-Bernoulli transformations, which generalize the previously known two-sided Bernoulli transformations. These transformations are defined by measure conjugacy to an extended shift map known as a zip shift. Zip shift maps are local homeomorphisms, and in case of finite symbolic sets,  serve as a useful framework for studying the measure-theoretic properties of finite-to-1 maps.

Our studies shows that exists a class of extended Bernoulli $n$-to-1 transformations ( of $(m,l)$-Bernoulli type) that is characterized by their Kolmogorov-Sinai (KS) entropy, and it is possible to provide a version of the Ornstein isomorphism theorem for this class of examples. However, as illustrated in Figure \ref{fig:1}, the general construction  of an $(m,l)$-Bernoulli transformation, leads to examples of maps with identical KS entropy, but which are not measure-theoretically conjugate. This observation led us to the development of an extended entropy formula, which culminates in the introduction of square entropy, the central subject of this paper.

In this context, we demonstrate that square entropy can be a candidate to classify uniform extended Bernoulli maps. We further show that a class of full zip shift maps exhibits intrinsic ergodicity with respect to square entropy. Specifically, we prove that these maps possess a unique measure of maximal entropy (MME), which is the most natural measure on subshifts and serves as a primary tool for analyzing their statistical properties (Section \ref{sec:5}). Subsequently, we define a topological version of square entropy and, using a variational principle for $n$-to-1 zip shift maps, we connect these two dynamical notions.

In what follows, in Section \ref{sec:2} we present some preliminary results for invertible measure preserving maps. In Subsection \ref{subsec:2.1} the definition of classical topological entropy and variational principle is given. In Subsection \ref{subsec:2.2} the zip shift dynamics are defined \cite{9}. The third Section introduces the notion of square measure and topological entropy. In Subsection \ref{subsec:3.1} and Subsection \ref{subsec:3.2} the square measure entropy and the square topological entropy are  established, correspondingly. In Section \ref{sec:4} we study the variational principal and intrinsic ergodicity of n-to-1 zip shift maps and, finally, in Section \ref{sec:5} we present a classification theorem for uniform n-to-1 maps of $(m,l)$-Bernoulli type, using the square entropy.

\section{Some preliminary results}\label{sec:2}
Let $(X,\mathcal{B},\mu)$ be a Borel probability space (or a Lebesgue Space \cite{8,10}) and $f:X\to X$ be a measure preserving map. Such dynamics equipped with an invariant measure is denoted by $(f,\mu)$ and we call them, measure dynamics. Take a finite measurable partition $\eta=\{X_1,X_2,\cdots,X_r\}$ of $X$. Given two partitions $\eta_1,\eta_2$ of $X$, let $\eta_1\vee\eta_2$ denote their sum, in which it is defined as following.
$$\eta_1\vee \eta_2=\{X_i^{1}\cap X_j^{2}| X_i^{1}\in \eta_1,\,X_j^{2}\in \eta_2\}.$$
Let associate some information function $I:X\to \mathbb{R}$ to any partition $\eta$ of $X$, where $I_{\eta}(x)=-\ln\,\mu(\eta(x))$. The entropy of the
partition $\eta$ is defined to be the mean of its information function,
\begin{equation}\label{P-entropy}
H_{\mu}(\eta) = \int I_{\eta} d\mu =
-\sum_{X_i\in \eta}\,\mu(X_i) \ln\, \mu(X_i),
\end{equation}
then for $\eta_1,\eta_2$ two finite measurable partitions of $X$, one can show that the following is valid \cite{012}.
\begin{equation}\label{Eq:2-1}
H_{\mu}(\eta_1 \vee \eta_2)\leq H_{\mu}(\eta_1)+H_{\mu}(\eta_2).
\end{equation}
Let $\eta=\{X_1,X_2,\cdots,X_r\}$ be a finite measurable partition for $X$ and define
$$\eta^n:=\bigvee_{i=0}^{n-1} f^{-i}(\eta),\,\,\,\,\,\,\,\,\,\, n \geq 1,$$
then $$h_{\mu}(f,\eta)=\lim_{n\to\infty}\frac{1}{n}H_{\mu}(\eta^n),$$
is the entropy of $f$ with respect to partition $\eta.$ 
The following Lemmas are known and useful.
\begin{lemma}\label{3.1}\cite{012}
	Suppose that $\{a_m\}_{m=1}^{+\infty}$ is a sequence satisfying $\inf \frac{a_m}{m}>-\infty$ and for all $m,n$ one has $a_{m+n}\leq a_m+a_n$. Then $\lim_{m\to +\infty}\frac{a_m}{m}$ exists and equals the $\inf \frac{a_m}{m}.$
\end{lemma}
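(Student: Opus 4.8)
The plan is to recognize this as the classical Fekete subadditive lemma and prove it by sandwiching the limit between $\liminf$ and $\limsup$. Write $L := \inf_{m\geq 1}\frac{a_m}{m}$, which by the standing hypothesis $\inf\frac{a_m}{m}>-\infty$ is a genuine real number. The easy half comes for free: since $\frac{a_m}{m}\geq L$ holds for every $m$ by the definition of the infimum, we immediately get $\liminf_{m\to\infty}\frac{a_m}{m}\geq L$. All the work lies in proving the reverse estimate $\limsup_{m\to\infty}\frac{a_m}{m}\leq L$, and this is precisely where the subadditivity assumption is used.

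For the upper bound I would fix $\varepsilon>0$ and choose, by the definition of infimum, an index $k$ with $\frac{a_k}{k}<L+\varepsilon$. The central device is the division algorithm: given an arbitrary large $m$, write $m=qk+r$ with $q\geq 0$ and $0\leq r<k$. Iterating the hypothesis $a_{p+q}\leq a_p+a_q$ yields $a_{qk}\leq q\,a_k$ by a short induction on $q$, and one further application gives $a_m\leq q\,a_k+a_r$. Dividing through by $m$ produces
\[
\frac{a_m}{m}\leq \frac{qk}{m}\cdot\frac{a_k}{k}+\frac{a_r}{m}.
\]
It then suffices to analyze the two terms on the right as $m\to\infty$.

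Because the remainder $r$ ranges only over the finite set $\{0,1,\dots,k-1\}$, the value $a_r$ is bounded by a constant $M=\max_{0\leq r<k}|a_r|$ independent of $m$, so $\frac{a_r}{m}\to 0$; simultaneously $q=\frac{m-r}{k}\to\infty$ and $\frac{qk}{m}=1-\frac{r}{m}\to 1$. Passing to the $\limsup$ therefore gives $\limsup_{m\to\infty}\frac{a_m}{m}\leq\frac{a_k}{k}<L+\varepsilon$. Since $\varepsilon>0$ was arbitrary, $\limsup_{m\to\infty}\frac{a_m}{m}\leq L$, and combining this with the lower bound above shows that the limit exists and equals $L$.

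The step I expect to demand the most care is the bookkeeping for the remainder term: one must confirm that the finitely many values $a_r$ are genuinely harmless in the limit, and handle the convention for $a_0$ cleanly (or equivalently restrict to $1\leq r\leq k$), since the sequence is indexed only from $m=1$. Everything else is elementary, resting on nothing more than the infimum characterization of $L$ together with a single iterated application of the subadditivity inequality.
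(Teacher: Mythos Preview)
Your argument is the standard, correct proof of Fekete's subadditive lemma; the paper does not supply its own proof of this statement but merely cites it from \cite{012}, so there is nothing to compare against. Your handling of the remainder term and your remark about the $r=0$ case are appropriate and the proof goes through as written.
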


\begin{lemma}\label{3.2}\cite{012}
	Every finite measurable partition $\eta=\{X_1,X_2,\cdots,X_r\}$ has finite entropy: $H_{\mu}(\eta)\leq \frac{1}{\#(\eta)}$ (where $\#$ stands for cardinality)
	and the identity holds if and only if $\mu(X_i)=\frac{1}{\#(\eta)}$ for every $X_i\in \eta$.
\end{lemma}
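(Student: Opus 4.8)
The plan is to prove the standard maximal-entropy inequality $H_{\mu}(\eta)\le \ln\#(\eta)$, which is what the statement must intend: the written right-hand side $\tfrac{1}{\#(\eta)}$ cannot be correct, since a two-set partition of equal masses already gives $H_{\mu}(\eta)=\ln 2>\tfrac12$. Abbreviating $r=\#(\eta)$ and $p_i=\mu(X_i)$, I would note that $(p_1,\dots,p_r)$ is a probability vector and, with the convention $0\ln 0=0$ built into the information function $I_{\eta}$, we have $H_{\mu}(\eta)=-\sum_{i=1}^{r}p_i\ln p_i$.

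The engine I would use is the elementary inequality $\ln x\le x-1$ for $x>0$, with equality exactly at $x=1$ (a one-line calculus fact, or a consequence of the concavity of $\ln$). Finiteness is then immediate, since each summand $-p_i\ln p_i$ lies in $[0,e^{-1}]$ and there are only $r$ of them. For the sharp bound I would compare $(p_i)$ with the uniform vector $q_i=1/r$: applying $\ln x\le x-1$ to $x=q_i/p_i$ over the support of $p$ and multiplying through by $p_i$ gives $p_i\ln(q_i/p_i)\le q_i-p_i$, and summation over $i$ collapses the right-hand side to $\sum_i(q_i-p_i)=0$. Since $\sum_i p_i\ln(q_i/p_i)=H_{\mu}(\eta)-\ln r$ once one uses $\sum_i p_i=1$ and $q_i=1/r$, this yields $H_{\mu}(\eta)\le\ln r=\ln\#(\eta)$.

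To settle the equality clause I would observe that equality in the summed estimate forces equality in every instance of $\ln x\le x-1$, hence $q_i/p_i=1$ and $p_i=1/r$ for all $i$; the converse is a direct computation. Since this is a textbook result, I expect no real obstacle: the only delicate point is the equality case, which rests on the strictness of $\ln x\le x-1$ (equivalently, on the strict concavity of $t\mapsto -t\ln t$, which is the route I would follow if I chose instead to argue through Jensen's inequality, applying it to the probability weights $1/r$ at the points $p_i$).
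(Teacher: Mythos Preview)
Your proof is correct, and you rightly flag the typo: the bound should be $H_{\mu}(\eta)\le \ln\#(\eta)$, not $\tfrac{1}{\#(\eta)}$; the paper's own use of the lemma (e.g.\ in the proof of Lemma~\ref{Lem:m-e}, where $H_{\mu}(\vee_{i=0}^{n-1}f^{-i}\eta)\le \ln l^{n}$ is invoked) confirms this is the intended statement. There is nothing to compare against: the paper does not supply a proof but simply cites the result from \cite{012}, so your Gibbs-inequality argument via $\ln x\le x-1$ is exactly the standard textbook route one would expect to find in that reference, and the equality analysis is handled cleanly.
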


The Kolmogrov-Sinai entropy (KS-entropy) of the measure dynamical system $(f,\mu)$ is defined as, 
\begin{equation}\label{B-ent}
h_{\mu}(f)=\sup_{\eta}h_{\mu}(f,\eta).
\end{equation}

The following is a version of Kolmogrov-Sinai Theorem, useful in calculating the entropy.
\begin{theorem}\label{Thm:S-K}\cite{012}
	Let $f$ be a continuous map with $\mathcal{P}_1<\mathcal{P}_2<\cdots < \mathcal{P}_n<\cdots$ being a non-decreasing sequence of partitions with finite entropy such that $\bigcup_{n=1}^{\infty}\mathcal{P}_n$ generates
	the $\sigma$-algebra of measurable sets, up to measure zero. Then,
	$$h_{\mu}(f)=\lim_{n\to +\infty}h_{\mu}(f,\mathcal{P}_n).$$
\end{theorem}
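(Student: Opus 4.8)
The plan is to prove the two opposite inequalities between $h_{\mu}(f)$ and $L:=\lim_{n\to+\infty}h_{\mu}(f,\mathcal{P}_n)$, having first noted that the limit exists. Because $\mathcal{P}_n<\mathcal{P}_{n+1}$, refining a partition never decreases its dynamical entropy, so $\{h_{\mu}(f,\mathcal{P}_n)\}_n$ is non-decreasing and $L$ exists in $[0,+\infty]$. The easy inequality $L\le h_{\mu}(f)$ is immediate from \eqref{B-ent}, since each term $h_{\mu}(f,\mathcal{P}_n)$ is one of the quantities over which the supremum defining $h_{\mu}(f)$ is taken.

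The substance lies in the reverse inequality $h_{\mu}(f)\le L$. By \eqref{B-ent} it suffices to show $h_{\mu}(f,\eta)\le L$ for an arbitrary finite measurable partition $\eta$. The main tool is the conditional comparison estimate
$$h_{\mu}(f,\eta)\le h_{\mu}(f,\mathcal{P}_n)+H_{\mu}(\eta\mid\mathcal{P}_n),$$
where $H_{\mu}(\eta\mid\mathcal{P}_n)$ is the conditional entropy of $\eta$ given $\mathcal{P}_n$. I would obtain this by bounding $H_{\mu}(\eta^{k})\le H_{\mu}(\mathcal{P}_n^{k})+H_{\mu}(\eta^{k}\mid\mathcal{P}_n^{k})$ via the chain rule, then using subadditivity of conditional entropy (the conditional form of \eqref{Eq:2-1}) together with the $f$-invariance of $\mu$ to get $H_{\mu}(\eta^{k}\mid\mathcal{P}_n^{k})\le k\,H_{\mu}(\eta\mid\mathcal{P}_n)$; dividing by $k$ and passing to the limit with Lemma \ref{3.1} yields the estimate.

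The crux is then to prove that $H_{\mu}(\eta\mid\mathcal{P}_n)\to 0$ as $n\to+\infty$. This is exactly where the generating hypothesis is used: since the $\mathcal{P}_n$ increase to $\mathcal{B}$ (mod $0$), the increasing martingale convergence theorem gives $\mathbb{E}(\mathbf{1}_E\mid\mathcal{P}_n)\to\mathbf{1}_E$ almost everywhere for every atom $E\in\eta$, and substituting this into the bounded, continuous integrand $-t\ln t$ produces $H_{\mu}(\eta\mid\mathcal{P}_n)\to H_{\mu}(\eta\mid\mathcal{B})=0$, the final equality holding because $\eta$ is itself $\mathcal{B}$-measurable. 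Granting this, I fix $\varepsilon>0$, choose $n$ with $H_{\mu}(\eta\mid\mathcal{P}_n)<\varepsilon$, and combine with the comparison estimate to get $h_{\mu}(f,\eta)\le L+\varepsilon$; letting $\varepsilon\to0$ and taking the supremum over all finite $\eta$ gives $h_{\mu}(f)\le L$.

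I expect the convergence $H_{\mu}(\eta\mid\mathcal{P}_n)\to0$ to be the main obstacle. One must justify passing from almost-everywhere convergence of the conditional expectations to convergence of the conditional entropies, which calls for a dominated-convergence argument controlling $-t\ln t$ uniformly on $[0,1]$, and one must verify that the null set tolerated in the generating hypothesis does not corrupt the entropy computation.
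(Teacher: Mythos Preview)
The paper does not supply its own proof of this statement: Theorem~\ref{Thm:S-K} is quoted from \cite{012} and used as a black box, so there is no in-paper argument to compare against. Your proposal is the standard Kolmogorov--Sinai argument one finds in \cite{012} or \cite{8}: monotonicity gives existence of the limit and the easy inequality, the comparison estimate $h_{\mu}(f,\eta)\le h_{\mu}(f,\mathcal{P}_n)+H_{\mu}(\eta\mid\mathcal{P}_n)$ reduces matters to the vanishing of conditional entropy, and martingale convergence plus dominated convergence (with the bounded function $-t\ln t$ on $[0,1]$) handles that last step. The outline is correct and complete at the level of a sketch; the only cosmetic point is that the dominated-convergence justification you flag as a potential obstacle is in fact routine, since $\sup_{t\in[0,1]}|t\ln t|<\infty$ and the conditional probabilities take values in $[0,1]$.
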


\textbf{Notation.}
	Let us, for some reason that appears in future sections, denote this entropy by \textbf{$h_{\mu}^-(f,\eta)$}.	We may call this entropy the \textbf{backward} or \textbf{$(KS)^-$ entropy }of $f$.

%{\color{red}.......In \cite{9} we denote this entropy with $h_{\mu}^{-}(f)$ and for transformations with LM-Bernoully property the $h_{\mu}^{+}(f)$ is defined analogously with respect to good image partitions (GIP).}

If $S$ is a finite set of alphabets, then by full shift we mean the
collection of all bi-infinite sequences of symbols from $ S$. The full $ l $-shift is the full shift over the alphabet set $S=\{0,1,2,...,l-1\}$. The shift map $ \sigma$ on the full shift space $ \Sigma_{S}=S^{\mathbb Z} $ maps a point $x$ to
the point $  y=\sigma(x)$ whose $i$th coordinate is $  y_{i}=x_{i+1}$. One can equip $\Sigma_S$ with a metric in order to obtain a metric space and define the cylinder sets of the form 
$$C_{i_1,\dots,i_k}^{s_{i_1},...,s_{i_k}}=\{(x_n)_{n\in\mathbb{Z}}\in\Sigma_{S}| x_{i_1}=s_{i_1},\dots, x_{i_k}=s_{i_k}\,s.t.\, x_{i_{j}}\in S, 1\leq j\leq k \},$$ 
which are the open sets of the metric topology. The collection of all such cylinder sets provide a basis for the metric topology and proceed to the Borel $\sigma$-algebra.

Let $p=(p_{0},p_{1},\dots,p_{l-1})$ be a probability vector (i.e. $p_{i}\geq 0$ and $\sum_{i=0}^{l-1}p_{i}=1$ for any $0\leq i\leq l-1$). We then define 
$\mu(C_{i_1,\dots,i_m}^{s_{i_1},\dots,s_{i_k}})=p_{s_{i_1}}\dots p_{s_{i_k}},$
and obtain a Borel probability space $(\Sigma_{S},\mathcal{B},\mu)$.

%Define the one-sided full shift$$\Sigma^{+}=\{x_{[0,\infty)}: x\in X\}$$the set of right-infinite sequences that appear in full shift $  X$.}\\ 
Bernoulli transformations provide a wide variety of ergodic and dynamic properties, which make them of great importance in the study of measure dynamical systems. Given a measure dynamics $(f,\nu)$, we say that it has \textit{Bernoulli property}, if $(f,\nu)$ is conjugated (mod-0) to a $(\sigma,\mu)$, with $\sigma: \Sigma_{S}\to \Sigma_{S},$ being a two-sided shift homeomorphism and $P=(p_0,\cdots, p_{l-1})$ a probability distribution associated to the symbolic set $S=\{0,1,...,l-1\}$. Once $p_i=\frac{1}{l}$ for all $0\leq i\leq l-1$, we call $\mu$ a \textbf{$\frac{1}{l}$-uniform measure} or simply a \textbf{uniform measure}. 
%We adapted the following definition, from Definition 2.4 of \cite{8}.
\begin{definition} [\textbf{Isomorphic Transformations}]\label{I-T}
	Let $(X_1,\mathcal{B}_1,\mu_1)$ and $(X_2,\mathcal{B}_2,\mu_2)$ be measure spaces and $T_1:X_1\rightarrow X_1,\,$ $ T_2:X_2\rightarrow X_2$, measure preserving transformations. Then $T_1$ is \textbf{isomorphic } (or conjugated mod-0) to $T_2$ if there exists $M_1\in\mathcal{B}_1,\,M_2\in\mathcal{B}_2$ with $\mu_1(M_1)=\mu_2(M_2)=1,$ such that, 
	\begin{description}
		\item[(i)] $T_1(M_1)\subseteq M_1,\, T_2(M_2)\subseteq M_2;$
		\item[(ii)] there exists an invertible measure preserving $\varphi :M_1\rightarrow M_2$ with $\varphi_{|_{M_{1}}} \circ T_{1}=T_{2}\circ \varphi_{|_{M_{2}}}$.
	\end{description}
\end{definition}

\begin{definition}[\textbf{Bernoulli Transformations}]
	The invertible measure preserving transformation $f:X\rightarrow X$ defined on a Lebesgue space $(X,\mathcal{B}, \mu)$ is called a \textit{Bernoulli transformation}, if it has the Bernoulli property.
\end{definition}

\begin{definition}[\textbf{One-sided Bernoulli Transformations}]
	The non-invertible measure preserving transformation $f:X\rightarrow X$ defined on a Lebesgue space $(X,\mathcal{B}, \mu)$ has \textit{one-sided Bernoulli property} if it is conjugated mod-0 with a one-sided shift map on a set $S$, of finite elements, where $P=(p_1,\cdots, p_n)$ is a probability distribution on elements of $S$
\end{definition}

Let $\sigma:\Sigma_S\to \Sigma_S$ be a two-sided (one-sided) shift in a finite set $S$ with $l$ alphabets.
One constructs a nested sequence of partitions on basic cylinder sets (i.e. $\mathcal{P}_1=\{C_i^{s_i}|s_i\in S\}$) that their finite intersections generate the Borel $\sigma$-algebra.
Let $\mathcal{P}_n=\{C_{0,\cdots,n-1}^{s_{i_1},\cdots,s_{i_{n-1}}}\}_{s_{i_k}\in S},\,n\geq 1$. 
%Observe that in a zip shift space, for $k\geq 0$, the $s_{i_{k}}\in S$ and for $k<0$, the $s_{i_{k}}\in Z$. 
Then using Kolmogrov-Sinai Theorem \cite{012} 
%Then, $$\mathcal{P}_2=\sigma_{\tau}^{-1}(\mathcal{P}_1)=\{C_1^{s_i}\}_{i=1}^{l}=\{\bigcup_{j}C_{0}^{s_j,s_i}\}_{i,j=1}^{l}.$$
the entropy of the two-sided (one-sided) shift as an example of a two-sided (one-sided) Bernoulli map with a uniformly distributed probability $p_i$ on elements of $S$, is as follows.
$$h_{\mu}(\sigma)=\lim_{n\to\infty}\frac{1}{n}H_{P}(\mathcal{P}^n)=-\sum_{s_i}p_{s_i}\ln p_{s_i}.$$
%with $(\Sigma,\mathcal{C},\nu)$ being the one-sided shift with probability distribution vector $P=(p_{s_1},\dots,p_{s_{l}})$ and $\nu(C_i^{s_i})=p_{s_i}$.
The Kolmogrov-Sinai entropy is an invariant of the conjugacy (mod-0), indeed any Bernoulli map $(f,\nu)$ conjugated with a uniform two-sided (one-sided) shift on $S$ alphabets has measure entropy equal $h_{\nu}(f)=h_{\nu}(\sigma)$.

\bigskip
\subsection{Topological entropy and Variational principle}\label{subsec:2.1}
Let $(X, d)$ be a compact metric space and $f: X\to X$ a continuous map. For $\epsilon > 0$ we will say that a set $E \subseteq X$ is $(n, \epsilon)$-spanning for $f$ if, for every $x \in X$, there is a $y \in E$ such that $d(f^j(x),f^j( y)) < \epsilon$ for $0\leq  j < n$. Note that by continuity of $f$ and compactness of $X$,  there are always finite $(n, \epsilon)$-spanning sets for every $\epsilon > 0$ and $n \geq 1$. Let $r_n(f, \epsilon)$ denote the size of the $(n, \epsilon)$-spanning set with fewest number of elements. Then
$$ r(f, \epsilon) := \limsup_{n \to \infty} \frac{-\ln r_n(f, \epsilon)}{n}, $$
the growth rate of $r_n(f, \epsilon)$ as $n \to \infty$. As $r(f, \epsilon)$ is non-decreasing as $\epsilon$ goes to $0$, so we may define
$$ h(\varphi) := \lim_{\epsilon \to 0} r(f, \epsilon). $$
This quantity is called the topological entropy of $f$.
%The topological entropy is defined with open covers $\alpha$ instead of measurable partitions. 
%The topological entropy of the cover $  \alpha$ is defined to be the logarithm $H(\alpha)=\ln N(\alpha)$ of the smallest number $N(\alpha)$ of sets that can be used in a subcover of $ \alpha $. In fact the topological entropy of $f$ relative to a cover $  \alpha$ is defined as, $$h_{top}(f,\alpha)=\limsup_{n\rightarrow +\infty}\frac{1}{n}H(\bigvee_{i=0}^{n-1}f^{-i}(\alpha)).$$Then, the topological entropy is defied by
%$$h_{top}(f)=\sup\{h(f,\alpha): \,\alpha\,\, \text{is a finite cover for}\, X\}.$$

Now let $f = \sigma$ be a bilateral shift map defined on $X \subseteq \Sigma_S$, equipped with the usual metric $d(x, y) = \frac{1}{2^{M(x, y)}}$, where $M(x, y) = \min\{|i| : x_i \neq y_i\}$. Then, one can verify that $r_n(f, \frac{1}{2^k}) = |B_{n+2k}(X)|$. Let $N = n + 2k$, then the following entropy formula for $\sigma$ is derived:
\begin{equation}\label{eq:T-ent}
h_{\text{top}}(\sigma) = \lim_{N \to \infty} \frac{1}{N} \ln |B_N(X)|. 
\end{equation}
Here, $B_N(X)$ represents the collection of permissible words for $N \geq 1$ and $B_N\in S^N$.
\begin{definition}\label{Def:gen}
	Let $f:X\to X$ be an n-to-1 local homeomorphism. Then we call a finite cover $\alpha,$ a \textit{Generator} for $f,$ if for any $\epsilon>0$ there exists $N>0$ such that the cover $\vee_{i=-N}^{N}f^{-i}(\alpha)=\{A_1,\dots,A_k\}$ consists of open sets each of which with diameter at most $\epsilon$. i.e. $\sup_i\,\,diam(A_i)<\epsilon$.
\end{definition}
When $f$ is a homeomorphism, we have the following known Proposition.
\begin{proposition}\label{Prop:0}\cite{8}
	If $f:X\to X$ is a homeomorphism and $\alpha$ is a generator for $f$, then $h_{top}(f)=h_{top}(f,\alpha).$
\end{proposition}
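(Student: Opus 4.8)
The plan is to compute both quantities through the open-cover entropy $h_{top}(f,\alpha)=\lim_{n\to\infty}\frac1n\ln N\!\bigl(\bigvee_{i=0}^{n-1}f^{-i}(\alpha)\bigr)$, where $N(\gamma)$ denotes the least cardinality of a subcover of a cover $\gamma$. The limit exists because the sequence $a_n=\ln N\bigl(\bigvee_{i=0}^{n-1}f^{-i}(\alpha)\bigr)$ is non-negative and subadditive (since $N(\gamma_1\vee\gamma_2)\le N(\gamma_1)N(\gamma_2)$), so that Lemma \ref{3.1} applies. I would then establish the two inequalities $h_{top}(f,\alpha)\le h_{top}(f)$ and $h_{top}(f)\le h_{top}(f,\alpha)$ directly against the spanning-set definition of $h_{top}(f)$, writing $h_{top}(f)=\lim_{\epsilon\to0}\limsup_{n\to\infty}\frac1n\ln r_n(f,\epsilon)$, a quantity monotone in $\epsilon$ so that $\limsup_n\frac1n\ln r_n(f,\epsilon)\le h_{top}(f)$ for each fixed $\epsilon$.

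For the inequality $h_{top}(f,\alpha)\le h_{top}(f)$ --- which needs neither invertibility nor the generator property --- I would fix a Lebesgue number $\delta$ for $\alpha$ and take a minimal $(n,\delta/2)$-spanning set $E$. For each $y\in E$ and each $0\le i<n$ the ball $B(f^i y,\delta/2)$ sits inside some member $U_i^y\in\alpha$, and the sets $A_y=\bigcap_{i=0}^{n-1}f^{-i}(U_i^y)$ then form a subcover of $\bigvee_{i=0}^{n-1}f^{-i}(\alpha)$ indexed by $E$. Hence $N\bigl(\bigvee_{i=0}^{n-1}f^{-i}(\alpha)\bigr)\le r_n(f,\delta/2)$, and passing to the growth rate gives $h_{top}(f,\alpha)\le\limsup_n\frac1n\ln r_n(f,\delta/2)\le h_{top}(f)$.

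The reverse inequality is where the generator hypothesis and invertibility enter. Fix $\epsilon>0$; by Definition \ref{Def:gen} choose $N$ so that every element of $\beta:=\bigvee_{i=-N}^{N}f^{-i}(\alpha)$ has diameter below $\epsilon$. Selecting one point from each member of a minimal subcover of $\bigvee_{j=0}^{n-1}f^{-j}(\beta)$ produces an $(n,\epsilon)$-spanning set, because two points lying in a common element of this refinement stay within diameter $\epsilon$ of each other under $f^0,\dots,f^{n-1}$; thus $r_n(f,\epsilon)\le N\bigl(\bigvee_{j=0}^{n-1}f^{-j}(\beta)\bigr)$. A telescoping identity rewrites $\bigvee_{j=0}^{n-1}f^{-j}(\beta)=\bigvee_{m=-N}^{n-1+N}f^{-m}(\alpha)$, and here the homeomorphism is used: taking preimages under $f^{N}$ is a bijection of $X$ that shifts indices, so it preserves minimal subcover cardinalities and identifies this two-sided join with $\bigvee_{m=0}^{n-1+2N}f^{-m}(\alpha)$. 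Therefore $r_n(f,\epsilon)\le N\bigl(\bigvee_{m=0}^{n-1+2N}f^{-m}(\alpha)\bigr)$, the extra constant length $2N$ is absorbed in the limit $\frac1n\ln(\cdot)$, and letting $\epsilon\to0$ yields $h_{top}(f)\le h_{top}(f,\alpha)$.

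I expect the main obstacle to be precisely the index-shift identity $N\bigl(\bigvee_{m=-N}^{n-1+N}f^{-m}(\alpha)\bigr)=N\bigl(\bigvee_{m=0}^{n-1+2N}f^{-m}(\alpha)\bigr)$: it is the one point at which invertibility of $f$ is indispensable, since for a genuinely $n$-to-$1$ map the negative iterates $f^{-i}$ appearing in the two-sided generator do not transport covers bijectively and the equality fails. The remaining care is bookkeeping --- checking that the Lebesgue-number and diameter comparisons between spanning sets and subcovers are set up with the correct one-sided join $\bigvee_{j=0}^{n-1}$ even though the generator is phrased with the two-sided join $\bigvee_{i=-N}^{N}$.
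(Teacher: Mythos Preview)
Your argument is correct and is essentially the classical Walters proof. Note, however, that the paper does not supply its own proof of this proposition: it is stated with the citation \cite{8} and used as a black box, so there is no in-paper argument to compare against. Your write-up simply fills in what Walters does, and the two-step spanning-set versus subcover comparison together with the index shift $\bigvee_{m=-N}^{n-1+N}f^{-m}(\alpha)\cong\bigvee_{m=0}^{n-1+2N}f^{-m}(\alpha)$ via the homeomorphism $f^{N}$ is exactly the standard route.
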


\begin{example}
	Let $\sigma$ be a full bilateral shift map defined on $\Sigma_S$ where $S=\{0,1,\dots, l-1\}$.  Then by \eqref{eq:T-ent}, the topological entropy $h_{top}(\sigma)=\ln l$
\end{example}

\begin{definition}[\textbf{Expansivity}]
	Let $(X,d)$ be a compact metric space and $f:X\to X$ a continuous dynamical system. We say that $f$ is an expansive map if there exists some $\delta>0$ such that for any $x,y\in X$, exists some $n\in\mathbb{Z}$ such that $d(f^n(x),f^n(y))>\delta$.
\end{definition}

\begin{theorem}\label{thm:vp}\cite{012}
	For $f:X\to X$ a continuous map defined on the compact metric space $X$. Then,
	$$h_{top} (f ) = \sup\{h_{\mu}(f) : \mu \in \mathcal{M}(X,f)\}.$$
	Where $\mathcal{M}(X,f)$ represents the set of all invariant probability measures.
\end{theorem}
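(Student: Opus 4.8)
The plan is to prove the two inequalities separately, following the classical Goodwyn--Dinaburg--Goodman argument. For the easy direction, $\sup_\mu h_\mu(f) \le h_{top}(f)$, I would fix an invariant measure $\mu \in \mathcal{M}(X,f)$ and a finite Borel partition $\xi = \{A_1,\dots,A_k\}$. Using regularity of the Borel measure on the compact metric space $X$, I would choose compact sets $B_i \subseteq A_i$ with $\mu(A_i \setminus B_i)$ as small as desired and refine $\xi$ by the extra atom $B_0 = X \setminus \bigcup_i B_i$. The point of the compact approximation is that an open cover subordinate to the $B_i$ has a Lebesgue number, so that the number of atoms of $\xi^n = \bigvee_{i=0}^{n-1} f^{-i}(\xi)$ that carry mass is controlled by a fixed $(n,\epsilon)$-spanning set. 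A counting estimate then yields $H_\mu(\xi^n) \le \ln r_n(f,\epsilon) + (\text{error from the approximation})$; dividing by $n$, letting $n\to\infty$, and then letting the approximation error vanish gives $h_\mu(f,\xi) \le h_{top}(f)$, and taking the supremum over $\xi$ gives $h_\mu(f) \le h_{top}(f)$.

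For the hard direction, $h_{top}(f) \le \sup_\mu h_\mu(f)$, I would fix $\epsilon > 0$ and, for each $n$, choose a maximal $(n,\epsilon)$-separated set $E_n$, whose cardinality dominates $r_n(f,\epsilon)$ up to halving $\epsilon$. The idea is to manufacture an invariant measure that ``sees'' the exponential growth of $|E_n|$. I would form the atomic measures $\sigma_n = |E_n|^{-1}\sum_{x\in E_n}\delta_x$ and their Birkhoff averages $\mu_n = \tfrac1n\sum_{i=0}^{n-1} f^i_*\sigma_n$, then pass to a weak-$*$ convergent subsequence $\mu_{n_j}\to\mu$ using compactness of the space of probability measures. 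A standard computation shows $\mu$ is $f$-invariant, so $\mu\in\mathcal{M}(X,f)$.

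The technical heart of the argument is to convert the counting $\ln|E_n|$ into the measure entropy of the limit $\mu$. I would fix a finite partition $\xi$ whose atoms have diameter less than $\epsilon$ and whose boundaries satisfy $\mu(\partial\xi)=0$, which is possible since at most countably many parallel boundaries can carry positive mass. Because $E_n$ is $(n,\epsilon)$-separated, two distinct points lying in the same atom of $\xi^n$ would satisfy $d(f^i x, f^i y)<\epsilon$ for all $0\le i<n$, a contradiction; hence each atom of $\xi^n$ contains at most one point of $E_n$, and $\ln|E_n| = H_{\sigma_n}(\xi^n)$. The remaining work is the combinatorial repackaging: fix $q\ge 1$, split the orbit segment of length $n$ into blocks of length $q$, use subadditivity of $H_{\sigma_n}(\cdot)$ over these blocks together with concavity of $t\mapsto -t\ln t$ to pass from $\sigma_n$ to the averaged measure $\mu_n$, obtaining $\tfrac1n\ln|E_n| \le \tfrac1q H_{\mu_n}(\xi^q) + \tfrac{2q\ln k}{n}$. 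Letting $n=n_j\to\infty$ and using $\mu(\partial\xi)=0$ (so that $H_{\mu_{n_j}}(\xi^q)\to H_\mu(\xi^q)$ by weak-$*$ convergence) gives $r(f,\epsilon) \le \tfrac1q H_\mu(\xi^q)$; letting $q\to\infty$ gives $r(f,\epsilon)\le h_\mu(f,\xi)\le \sup_\nu h_\nu(f)$, and finally $\epsilon\to 0$ yields $h_{top}(f)\le\sup_\nu h_\nu(f)$.

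The main obstacle I expect is precisely this last combinatorial step in the hard direction: controlling $H_{\sigma_n}(\xi^n)$ by the averaged measure requires the blocking argument and the concavity estimate to be carried out carefully, and the passage to the weak-$*$ limit is legitimate only because the partition boundaries are chosen to be $\mu$-null. Everything else---the regularity approximations, the invariance of $\mu$, and the elementary comparisons between spanning and separated sets---is routine.
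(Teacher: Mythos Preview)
Your proposal is correct: it is the standard Goodwyn--Dinaburg--Goodman/Misiurewicz argument, sketched accurately in both directions. However, there is nothing to compare it against: the paper does not supply its own proof of this theorem but simply cites it from the textbook \cite{012} (Oliveira--Viana), where essentially the argument you outline appears. So your approach is not merely compatible with the paper's proof---it \emph{is} the proof the paper defers to.
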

The $f$-invariant measure $\mu$ in which $h_{top} (f)=h_{\mu}(f),$ is called the \textbf{"measure of maximal entropy"}. Whenever the measure of maximal entropy is unique, then $X$ is called \textbf{"intrinsically ergodic"}.

Let $S$ and $Z$ denote two sets of finite alphabets. Set $\sigma^{L}:\Sigma_{S}\to\Sigma_{S}$ as the \textbf{left} shift map defined on $\Sigma_S=\prod_{-\infty}^{+\infty} S$ and   $\sigma^{R}:\Sigma_{Z}\to\Sigma_{Z}$ as the \textbf{right} shift map defined on $\Sigma_Z=\prod_{-\infty}^{+\infty} Z$.

The following lemma is an adaptation from \cite{8}.
\begin{lemma}\label{Lem:m-e}
	The two-sided shift map $ \sigma^i, i=L,R$ has unique measure of maximal entropy and this unique measure is the $\frac{1}{l}$-uniform measure.
\end{lemma}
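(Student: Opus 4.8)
The plan is to combine the variational principle, the Kolmogorov--Sinai theorem, and the equality clause of Lemma \ref{3.2}, reducing the whole question to a single generating partition. Write $\sigma$ for either $\sigma^{L}$ or $\sigma^{R}$ and let $l$ denote the number of symbols (so $l=\#S$ for $\sigma^{L}$ and $l=\#Z$ for $\sigma^{R}$); the argument never uses the direction of the shift. By the Example we have $h_{top}(\sigma)=\ln l$, so an invariant measure $\nu$ is a measure of maximal entropy exactly when $h_{\nu}(\sigma)=\ln l$. Existence is immediate: by the entropy formula for a uniform Bernoulli shift recorded above, the $\frac{1}{l}$-uniform measure $\mu$ satisfies $h_{\mu}(\sigma)=-\sum_{s}\frac{1}{l}\ln\frac{1}{l}=\ln l$. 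The real content is uniqueness.

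For uniqueness, fix a measure of maximal entropy $\nu$ and take the time-zero partition $\mathcal{P}_1=\{C_0^{s}\mid s\in S\}$, which has $l$ elements and whose refinements generate $\mathcal{B}$. Setting $\mathcal{P}_1^n=\bigvee_{i=0}^{n-1}\sigma^{-i}\mathcal{P}_1$, the cells of $\mathcal{P}_1^n$ are precisely the length-$n$ cylinders, so $\#\mathcal{P}_1^n=l^n$ and Lemma \ref{3.2} gives
\begin{equation}\label{eq:mme-ub}
H_{\nu}(\mathcal{P}_1^n)\leq \ln\big(\#\mathcal{P}_1^n\big)=n\ln l,
\end{equation}
with equality if and only if $\nu$ assigns mass $l^{-n}$ to every length-$n$ cylinder. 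By Theorem \ref{Thm:S-K}, since $\mathcal{P}_1$ generates, $h_{\nu}(\sigma)=h_{\nu}(\sigma,\mathcal{P}_1)=\lim_{n\to\infty}\frac{1}{n}H_{\nu}(\mathcal{P}_1^n)$.

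The crux is to promote the hypothesis $h_{\nu}(\sigma)=\ln l$, which only concerns the limit, into equality in \eqref{eq:mme-ub} at every finite level $n$. Here I would use Lemma \ref{3.1}: the sequence $a_n:=H_{\nu}(\mathcal{P}_1^n)$ is nonnegative and subadditive, since $\mathcal{P}_1^{m+n}=\mathcal{P}_1^m\vee\sigma^{-m}\mathcal{P}_1^n$ together with \eqref{Eq:2-1} and the $\sigma$-invariance of $\nu$ give $a_{m+n}\leq a_m+a_n$. Hence $\lim_n \frac{a_n}{n}=\inf_n \frac{a_n}{n}=\ln l$. Since $\frac{a_n}{n}\leq \ln l$ for every $n$ by \eqref{eq:mme-ub}, while the infimum is a lower bound equal to $\ln l$, we conclude $\frac{a_n}{n}=\ln l$, i.e. $H_{\nu}(\mathcal{P}_1^n)=n\ln l$, for all $n$. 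The equality clause of Lemma \ref{3.2} then forces $\nu$ to assign $l^{-n}$ to each length-$n$ cylinder. These values agree with those of the $\frac{1}{l}$-uniform Bernoulli measure $\mu$ on the cylinders, which form a generating algebra, so a standard uniqueness argument for measures yields $\nu=\mu$. Since no step distinguished $\sigma^{L}$ from $\sigma^{R}$, both cases follow.

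I expect the main obstacle to be exactly the promotion step above: the maximal-entropy hypothesis constrains only the asymptotic average $\frac1n H_{\nu}(\mathcal{P}_1^n)$, whereas the rigidity we need---uniform mass on cylinders---comes from the equality case of Lemma \ref{3.2}, which must be applied at each finite $n$. The subadditive ``limit equals infimum'' principle of Lemma \ref{3.1}, combined with the per-level bound $\frac1n H_{\nu}(\mathcal{P}_1^n)\le\ln l$, is precisely what forces equality at every level simultaneously; the remaining verifications (subadditivity, and that matching all cylinder measures determines $\nu$) are routine.
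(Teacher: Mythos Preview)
Your argument is correct and is essentially the paper's own proof written out in full: the paper also fixes the time-zero generator $\eta=\{C_0^j\}$, invokes \eqref{Eq:2-1} together with Lemmas \ref{3.1} and \ref{3.2} to squeeze $\ln l=h_\mu(\sigma)\le \tfrac1n H_\mu(\bigvee_{i=0}^{n-1}\sigma^{-i}\eta)\le \ln l$, and reads off that every length-$n$ cylinder has mass $l^{-n}$. Your version simply makes explicit the subadditivity step behind Lemma \ref{3.1} and the Carath\'eodory-type uniqueness at the end, which the paper leaves implicit.
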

\begin{proof}
	We know $h_{top}(\sigma)=\ln l$. Suppose that for some $\sigma$-invariant $\mu$, $h_{\mu}(\sigma)=\ln l$. Let $\eta=\{C_{0}^j|j\in S\}$ (i.e. $C_{0}^j=\{(x_n)_{n\geq 0}: x_{0}=j\}$) be a generator of the $\sigma$-algebra. Then by \eqref{Eq:2-1} and Lemmas \ref{3.1} and \ref{3.2}, $\ln l=h_{\mu}(\sigma)\leq \frac{1}{n}H_{\mu}(\vee _{i=0}^{n-1}f^{-i}\eta)\leq\frac{1}{n}\ln l^{n}$. 
	Therefore each member of $\vee _{i=0}^{n-1}f^{-i}\eta$ has measure $(\frac{1}{l})^n$ and hence $ \mu $ is the $\frac{1}{l}$-uniform measure.
\end{proof}

From Theorem \ref{thm:vp} the following corollary arises.
\begin{corollary}\label{cor:1}
	The variational principle is valid for two-sided shift maps $\sigma^R$ and $\sigma^L$.
\end{corollary}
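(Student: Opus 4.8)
The plan is to obtain the corollary as an immediate specialization of the general variational principle (Theorem \ref{thm:vp}). Since Theorem \ref{thm:vp} asserts that $h_{top}(f) = \sup\{h_{\mu}(f) : \mu \in \mathcal{M}(X,f)\}$ for \emph{any} continuous self-map $f$ of a compact metric space $X$, the entire task reduces to checking that the shift maps $\sigma^{L}$ and $\sigma^{R}$ each satisfy these two hypotheses; once that is done, the conclusion is nothing more than a direct invocation of the theorem.

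First I would verify compactness. Because $S$ and $Z$ are finite alphabet sets, the spaces $\Sigma_{S}=\prod_{-\infty}^{+\infty}S$ and $\Sigma_{Z}=\prod_{-\infty}^{+\infty}Z$, equipped with the metric $d(x,y)=\frac{1}{2^{M(x,y)}}$ introduced above, are compact: each is a countable product of finite discrete (hence compact) spaces, so compactness follows from Tychonoff's theorem, and one checks that the product topology coincides with the metric topology generated by $d$. Second I would verify continuity of the shifts with respect to this metric. If two points agree on the central block of coordinates indexed by $\{-N,\dots,N\}$, then their images under $\sigma^{L}$ (respectively $\sigma^{R}$) agree on the slightly smaller central block indexed by $\{-N+1,\dots,N-1\}$; hence $d(\sigma^{i}(x),\sigma^{i}(y))\le 2\,d(x,y)$, showing each shift is Lipschitz and in particular continuous.

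With both hypotheses in place, Theorem \ref{thm:vp} applies verbatim to each map, giving
$$h_{top}(\sigma^{i}) = \sup\{h_{\mu}(\sigma^{i}) : \mu \in \mathcal{M}(\Sigma,\sigma^{i})\}, \qquad i=L,R,$$
which is exactly the assertion that the variational principle holds for $\sigma^{L}$ and $\sigma^{R}$. I expect no genuine obstacle here: all of the substance is already contained in Theorem \ref{thm:vp}, and the corollary is a routine verification that shift spaces over finite alphabets fall within its scope. If one wishes to strengthen the statement, I would append the observation that for the full shift the supremum is in fact \emph{attained}: by Lemma \ref{Lem:m-e} the unique $\frac{1}{l}$-uniform measure realizes $h_{\mu}(\sigma^{i})=h_{top}(\sigma^{i})=\ln l$, so the variational principle holds with equality witnessed by a concrete measure of maximal entropy rather than merely as a supremum.
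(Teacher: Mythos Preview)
Your proposal is correct and follows exactly the paper's approach: the paper states that the corollary ``arises'' from Theorem~\ref{thm:vp}, and you have simply spelled out the routine verification that $\Sigma_S$ and $\Sigma_Z$ are compact metric spaces on which $\sigma^L$ and $\sigma^R$ are continuous, so that Theorem~\ref{thm:vp} applies directly. Your added remark about attainment via Lemma~\ref{Lem:m-e} is a nice bonus but not required for the corollary as stated.
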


\begin{definition}[\textbf{An m-to-1 local homeomorphism}]\label{m-t-1 lh}
	Let $X$ be a compact metric space, and $\mathcal{P}$ be a finite measurable partition of $X$ (where $X$ is a disjoint union of the elements of $\mathcal{P}$ (mod 0) with $\#(\mathcal{P})=m$. Denote the elements of the partition by $X_i, i=1,\cdots,m$. Then we say that $f:X\to X$ is a m-to-1 local homeeomorphism, whenever $f_{{|}_{X_i}}:=f_i: X_i\to f(X)$  is a homeomorphism for any $i=1,\cdots, m$, $X_i\in \mathcal{P}$. 
	%Moreover if all $X_i$ has the same volume, then we call $f$ a conservative m-to-1 local diffeomorphism.
\end{definition} 

\subsection{Zip shift space}\label{subsec:2.2}

In this subsection, we describe the zip shift maps \cite{7}. 
Let $Z=\{a_1,a_2,\cdots, a_m\}$ and $S=\{0,1,\dots, l-1\},$ be two collections of symbols that $m\leq l$ and  $\tau:S\to Z$ a surjective map. 
Consider $\prod_{-\infty}^{+\infty}S$ and let $\bar t=(t_i)\in\prod_{-\infty}^{+\infty}S$. Then, to any such point $\bar t=(t_i)_{i\in \mathbb{Z}}$ correspond a point $\bar x=(x_i)_{i\in \mathbb{Z}}$, such that
\begin{equation}\label{Z}
x_i=\left\{\begin{tabular}{ll}
$t_i\in S \hspace{7mm} $\,\,\,\,\quad\quad\quad$\forall i\geq 0$\\
$\tau(t_{i})\in Z \hspace{7mm} $\,\,\,\quad\quad $\forall i<0$. 
\end{tabular}\right.
\end{equation}
Define 
$\Sigma= \{\bar x=(x_i)_{i\in \mathbb{Z}}\ : x_i \textrm{\ satisfies} \,(\ref{Z})\}.$\\ We equip $\Sigma$ with a distance $\bar{d}$. 
Let $M:\Sigma\rightarrow\mathbb{N}\cup \{0\}$ be given as, $$M(x,y)=\left\lbrace\begin{array}{lr}
\infty,\quad\quad\quad\quad\quad\quad\quad\quad\quad\text{ if } x=y,\\
\min\{|i|;\,x_i\neq y_i \},\quad\quad\quad\text{ if }x\neq y,
\end{array}\right.$$
then, $\bar{d}(x,y)=\frac{1}{2^{M(x,y)}}$ is a metric which induces a topology on $\Sigma.$ 
When $\Lambda_1$ and $\Lambda_2$ are two subsets of $\Sigma$, let define 
$d(\Lambda_1,\,\Lambda_2)=\min\{\bar d(\omega_1,\,\omega_2):\, \omega_1\in \Lambda_1,\,\omega_2\in \Lambda_2\}$.
Consider the metric space $(\Sigma, \bar{d})$.
We define $\sigma_{\tau}:\Sigma\to\Sigma$ as follows.
\begin{eqnarray}\label{ZS}
(\sigma_{\tau}(\bar x))_i=\left\{\begin{tabular}{ll}
$x_{i+1} \,\,\,\quad\quad \text{if}\,\,i\neq -1,$ \\
$\tau(x_{0}) \quad\quad \text{if}\,\,i=-1.$
\end{tabular}\right.
\end{eqnarray}
We call this map \textbf{Zip shift} and the pair $(\Sigma, \sigma_{\tau})$ is called the\textbf{ (full) Zip shift Space} on $(m,l)$ symbols. Unless otherwise specified we denote a full zip shift space only by $\Sigma$.
Now consider $\Sigma$ as a full shift space and let $B_n(\Sigma)$ be the set of all words of length $n$ in $\Sigma$. Then the set $\mathcal{L}=\bigcup_{n\geq 0}B_n(\Sigma)$
is the set of all \textit{admissible} words or the \textit{language} of $\Sigma$. Any $X\subseteq \Sigma$ which is $\sigma_{\tau}$-invariant and closed is called a "\textit{sub-zip shift space}". Examples of sub-zip shift spaces are zip shifts of finite type and $M$-step zip shifts which include a set of forbidden words from $B_n(\Sigma)$ for some $n>0$ and are defied extensively in \cite{7}.
\begin{example}\label{ex:3.1}
	Let $S=\{0,1,2,3\}$ and $Z=\{a,b\}$. Then, the corresponded onto map $\tau:S\to Z$ is defined as $\tau(0)=\tau(2)=a$ and $\tau(1)=\tau(3)=b$.  
	Let $\bar x=(x_n)_{n\in\mathbb{Z}}=(\cdots a\,b\,a\,b\,b\,\textbf{.}\,1\,0\,3\,1\,1\,2\,\cdots).$  One can verify that $$\sigma_{\tau}((\cdots\,a\,b\,a\,b\,b\,\textbf{.}\,1\,0\,3\,1\,1\,2\,\cdots))=(\cdots\,a\,b\,a\,b\,b\,b\,\textbf{.}\,0\,3\,1\,1\,2\,\cdots),$$
	and 
	$$\sigma_{\tau}^2((\cdots\,a\,b\,a\,b\,b\,\textbf{.}\,1\,0\,3\,1\,1\,2\,\cdots))=(\cdots\,a\,b\,a\,b\,b\,b\,a\,\textbf{.}\,3\,1\,1\,2\,\cdots).\blacksquare$$ 
	In \cite{9} the authors show that this zip shift map is conjugated (mod-0) with a 2-to-1 baker's transformation represented in Figure \ref{fig:1}.		
\end{example}

\begin{figure}[h]
	\begin{center}
		\includegraphics[width=0.65\textwidth]{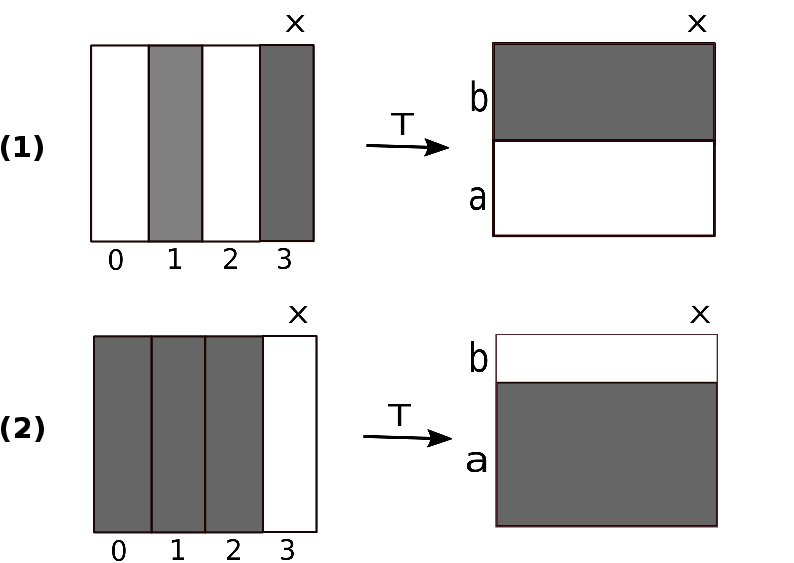}
		\caption{A 2-to-1 and a finite-to-1 Baker's transformation. Both are $(2,4)$-Bernoulli maps with the same Kolmogorov-Sinai (KS) entropy.}
		\label{fig:1}  
	\end{center}
\end{figure}

\begin{example}\label{Ex:2}
	Let $S=\{0,1,2,3\},\, Z=\{a,b\}$, with $F=\{ab,ba\}$ being the set of the forbidden words. Then consider the $X_{F}\subseteq \Sigma$ as a sub-zip shift space where the associated dynamics is $\sigma_{\tau}:X_F\to X_F$ with $\tau:S\to Z$ being $\tau(0)=\tau(1)=a$ and $\tau(2)=\tau(3)=b$. 
	
	%	\begin{figure}[h]
	%		\includegraphics[width=0.35\textwidth]{2x.pdf}
	%		\caption{$f(x)=2x\,\, mod 1$}
	%		\label{Fig:2}  
	%	\end{figure}	 
	
\end{example} 
\begin{proposition}\cite{9}
	The followings are valid for $\sigma_\tau:\Sigma \to \Sigma$.
	\begin{itemize}
		\item $\sigma_{\tau}(\Sigma)=\Sigma.$
		\item $\sigma_{\tau}$ is a local homeomorphism.
	\end{itemize}
\end{proposition}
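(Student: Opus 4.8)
The plan is to handle the two assertions separately, after first recording a concrete description of $\Sigma$ that makes the coordinate bookkeeping transparent. Because $\tau$ is surjective, a sequence $\bar x=(x_i)_{i\in\mathbb{Z}}$ lies in $\Sigma$ if and only if $x_i\in S$ for every $i\geq 0$ and $x_i\in Z$ for every $i<0$: the forward direction is immediate from (\ref{Z}), while for the converse one sets $t_i=x_i$ for $i\geq 0$ and chooses, for each $i<0$, some $t_i\in\tau^{-1}(x_i)$ (nonempty since $\tau$ is onto and $x_i\in Z$), so that the resulting $\bar t\in\prod_{-\infty}^{+\infty}S$ produces $\bar x$ via (\ref{Z}). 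With this characterization in hand, the inclusion $\sigma_\tau(\Sigma)\subseteq\Sigma$ is a coordinatewise check against (\ref{ZS}): for $i\geq 0$ one has $(\sigma_\tau\bar x)_i=x_{i+1}\in S$, for $i=-1$ one has $(\sigma_\tau\bar x)_{-1}=\tau(x_0)\in Z$, and for $i\leq -2$ one has $(\sigma_\tau\bar x)_i=x_{i+1}\in Z$.

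For the surjectivity statement $\sigma_\tau(\Sigma)=\Sigma$ I would construct preimages explicitly. Given $\bar y\in\Sigma$, define $\bar x$ by $x_j=y_{j-1}$ for all $j\neq 0$ and $x_0\in\tau^{-1}(y_{-1})$, the latter being nonempty as $y_{-1}\in Z$ and $\tau$ is onto. The characterization above confirms $\bar x\in\Sigma$ (indeed $y_{j-1}\in S$ for $j\geq 1$, $x_0\in S$, and $y_{j-1}\in Z$ for $j\leq -1$), and comparing with (\ref{ZS}) gives $(\sigma_\tau\bar x)_i=x_{i+1}=y_i$ for $i\neq -1$ together with $(\sigma_\tau\bar x)_{-1}=\tau(x_0)=y_{-1}$, so $\sigma_\tau(\bar x)=\bar y$.

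For the local homeomorphism claim, the key observation is that the only failure of injectivity of $\sigma_\tau$ is concentrated at coordinate $0$, which enters the image solely through $\tau(x_0)$; pinning down that coordinate restores injectivity. Concretely, fix $\bar x\in\Sigma$ and take the open ball $U=\{y\in\Sigma:\bar d(\bar x,y)<1\}$, which equals the cylinder $\{y: y_0=x_0\}$ because $\bar d(\bar x,y)<1$ is equivalent to $M(\bar x,y)\geq 1$. If $\bar x',\bar x''\in U$ satisfy $\sigma_\tau\bar x'=\sigma_\tau\bar x''$, then (\ref{ZS}) forces $x'_j=x''_j$ for all $j\neq 0$, while $x'_0=x''_0=x_0$ by definition of $U$; hence $\bar x'=\bar x''$ and $\sigma_\tau|_U$ is injective. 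A computation identical to the surjectivity argument shows $\sigma_\tau(U)$ is exactly the cylinder $\{y: y_{-1}=\tau(x_0)\}$, which is open. Continuity of $\sigma_\tau$ follows from the Lipschitz estimate $M(\sigma_\tau\bar x',\sigma_\tau\bar x'')\geq M(\bar x',\bar x'')-1$.

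To conclude, $\Sigma$ is compact: the metric $\bar d$ induces the product topology on $\Sigma$, which is a product of the finite alphabets $S$ and $Z$ and hence compact by Tychonoff. Since $U$ and $\sigma_\tau(U)$ are closed cylinders in the compact Hausdorff space $\Sigma$, the continuous bijection $\sigma_\tau|_U\colon U\to\sigma_\tau(U)$ between compact Hausdorff spaces is automatically a homeomorphism, which gives the local homeomorphism property at $\bar x$. I expect the local part to require the most care, namely correctly identifying the neighborhood $U$ on which injectivity holds and verifying that $\sigma_\tau(U)$ is precisely an open cylinder; once compactness is invoked the inverse is continuous for free, so no explicit inverse need be written down. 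The remaining verifications are routine coordinate bookkeeping against (\ref{Z}) and (\ref{ZS}).
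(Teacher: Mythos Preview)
Your proof is correct. The paper does not supply its own proof of this proposition; it simply cites \cite{9}, so there is no in-text argument to compare against. Your approach---characterizing $\Sigma$ as all sequences with $S$-entries in nonnegative coordinates and $Z$-entries in negative coordinates, building explicit preimages via surjectivity of $\tau$, and then restricting to the cylinder $\{y:y_0=x_0\}$ to obtain an injective continuous map onto the open cylinder $\{y:y_{-1}=\tau(x_0)\}$ between compact Hausdorff spaces---is exactly the natural one and matches in spirit what is carried out in \cite{9}. The only minor remark is that your Lipschitz estimate $M(\sigma_\tau\bar x',\sigma_\tau\bar x'')\geq M(\bar x',\bar x'')-1$ deserves a one-line verification in the edge case $i=-1$ (it holds because $M(\bar x',\bar x'')\geq 1$ already forces $x'_0=x''_0$, hence $\tau(x'_0)=\tau(x''_0)$), but this is routine.
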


\begin{proposition}
	The zip shift maps are expansive local homeomorphisms.
\end{proposition}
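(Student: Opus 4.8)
The previous proposition already supplies that each $\sigma_\tau$ is a surjective local homeomorphism, so the plan is to prove only the expansivity clause, and to do so with an \emph{explicit} expansive constant $\delta=\tfrac12$ (indeed any $\delta<1$ works, since under the metric $\bar d$ the maximal value $1$ is attained precisely when two points disagree in the $0$-th coordinate). Fix $x\neq y$ in $\Sigma$ and set $k=M(x,y)=\min\{|i|:x_i\neq y_i\}$, the index of closest disagreement. I would split into two cases according to whether this closest disagreement is witnessed on the non-negative side or only at a strictly negative position, because the two sides of the zip shift behave asymmetrically.

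In the forward case, suppose $x_{i_0}\neq y_{i_0}$ for some $i_0\geq 0$ with $i_0=k$. Since $\sigma_\tau$ acts as the ordinary left shift on the non-negative coordinates, an index bookkeeping gives $(\sigma_\tau^{\,i_0}(x))_0=x_{i_0}$ and $(\sigma_\tau^{\,i_0}(y))_0=y_{i_0}$, so the two images already disagree at the $0$-th coordinate; hence $\bar d(\sigma_\tau^{\,i_0}(x),\sigma_\tau^{\,i_0}(y))=1>\delta$. Note that any disagreement that the $\tau$-relabelling pushes onto the negative side only increases $|i|$ and is irrelevant once the $0$-th coordinate differs, so no further care is needed here.

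The genuinely new situation — and what I expect to be the main obstacle — is when $x$ and $y$ agree on every non-negative coordinate and first disagree at a strictly negative position $-k$, with both symbols lying in $Z$. Forward iteration is useless here: it drives the disagreement further left, giving $\bar d(\sigma_\tau^{\,n}(x),\sigma_\tau^{\,n}(y))=2^{-(k+n)}\to 0$. One is thus forced to iterate backwards, and since $\sigma_\tau$ is only $m$-to-$1$ its inverse is multivalued, so the very meaning of $f^{n}$ for $n<0$ in the stated definition must be pinned down. I would make this precise by first describing a branch of $\sigma_\tau^{-1}(w)$: it is the right shift of $w$ together with a freely chosen symbol $v_0\in\tau^{-1}(w_{-1})$ inserted at position $0$. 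Iterating along arbitrary branches, the disagreement originally at $-k$ is carried one step to the right at each stage, reaching position $-1$ after $k-1$ steps; at the intermediate steps the relevant $Z$-symbols agree, so compatible preimages can be selected and the only disagreement stays on the propagating coordinate.

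The decisive point is the final, $k$-th backward step: the two orbits then disagree at position $-1$ with symbols exactly $x_{-k}\neq y_{-k}$, and because $\tau$ is a genuine map its fibres $\tau^{-1}(x_{-k})$ and $\tau^{-1}(y_{-k})$ are disjoint. Consequently, whichever preimages are chosen, the two new $0$-th coordinates lie in disjoint fibres and must differ, so the separation to distance $1>\delta$ is forced for \emph{every} backward branch. This branch-independence is exactly what resolves the multivaluedness and lets one read off two-sided expansivity from the definition with $n=-k$. Combining the two cases yields a single $\delta$ valid for all $x\neq y$, and together with the local-homeomorphism property from the preceding proposition this establishes that $\sigma_\tau$ is an expansive local homeomorphism; the only routine work left is the explicit index tracking under the left shift and the $\tau$-relabelling.
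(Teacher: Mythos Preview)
Your proposal is correct and follows the same route as the paper: take $\delta=\tfrac12$, split on the sign of the minimal-modulus disagreement index, shift forward to place a non-negative disagreement at position $0$, and for a strictly negative disagreement at $-k$ pass to $k$-fold preimages, where the paper records the outcome via the set distance $d(\{\sigma_\tau^{-k}x\},\{\sigma_\tau^{-k}y\})=1$. Your write-up is in fact more explicit than the paper's at the one nontrivial point---you spell out why the $0$-th coordinates of \emph{all} $k$-fold preimages must differ (disjointness of the fibres $\tau^{-1}(x_{-k})$ and $\tau^{-1}(y_{-k})$), which the paper asserts without justification.
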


\begin{proof}
	Let $\sigma_{\tau}$ represent a zip shift map on $(m,l)$ symbols and take $\delta=1/2$. For any $x,y \in \Sigma$ recall that  $\bar d(x,y)=\frac{1}{2^{M(x,y)}}$ where 
	$$M(x,y)=\left\lbrace\begin{array}{lr}
	\infty,\quad\quad\quad\quad\quad\quad\quad\quad\quad\text{ if } x=y\\
	\min\{|i|;\,x_i\neq y_i \},\quad\quad\quad\text{ if }x\neq y
	\end{array}\right.$$
	Let $x\neq y $. Then there exists some $i\in \mathbb{Z}$ such that $x_{i}\neq y_{i}$. Let $i$ be the least in modulus of such $i$. If $i>0$, $d(\sigma_{\tau}^{i}x,\sigma_{\tau}^{i}y)=\frac{1}{2^0}>1/2$  and if $i<0$, then $d(\{\sigma_{\tau}^{i}x\},\{\sigma_{\tau}^{i}y\})=\frac{1}{2^0}>1/2$. Indeed the zip shift map is an expansive dynamical system. 
\end{proof}

Let $S=Z=\{0,1,\cdots,l\}$, then the known \textit{two-sided shift homeomorphism}, $\sigma:\Sigma_{S}\to \Sigma_{S}$ where $\Sigma_{S}=\prod_{-\infty}^{\infty}S$, is a zip-shift map on $l$ symbols, in which the onto map $\tau:S\to S$ is the identity. 
%Besides as in Example \ref{Ex:2} the known \textit{right shift map} $\sigma:\Sigma_{S}^+\to \Sigma_{S}^+$ where $\Sigma_{S}^+=\prod_{0}^{\infty}S$, is in fact a zip shift map, with $Z=\{a\}$ and the onto map $ \tau: S \to Z $ being the constant function $\tau(x)=a.$
The basic cylinders on a zip shift space are defined as follows.
\begin{equation}\label{B-C}
C_i^{s_i}=\{x\in\Sigma\, |\,x_i=s_i\,\},
\end{equation}
such that if $i<0,\,s_i\in Z,$ and if $\,i\geq 0,\,s_i\in S$.
The $C_i^{s_i}$ presents the set of all sequences, that have $s_i$ in the $i-$th entry.  For $i,n\in \mathbb{Z}$ and $\ell\in \mathbb{N}\cup \{0\}$, one can define a general cylinder set as follows. 
$$C_{i_1\dots,i_k}^{s_{1},\dots,s_{k}}=\{ (t_n)\in \Sigma|t_{i_1}=s_1,\cdots,t_{i_k}=s_k; s_1,\dots,s_k\in S\cup Z\},$$
where  $s_j\in Z,\, \text{if,}\,i_j<0,\,\text{and}\,s_j\in S,\, \text{if,}\,i_j\geq 0$ ($1\leq j\leq k$). As cylinder sets are defined independently, one observes that, $$C_{i}^{s_i,\cdots,s_{i+l}}=C_{i}^{s_i}\cap C_{i+1}^{s_{i+1}}\cdots \cap C_{i+l}^{s_{i+l}},$$ and any general cylinder set can be produced in this way.  Such cylinder sets are clopen subsets in the metric topology of $(\Sigma, \bar d)$.  The set of all such cylinder sets, form a basis for the topology and the metric space $(\Sigma,\bar d)$ is compact, totally disconnected and perfect. Indeed it is a Cantor set. One can consider the Borel $\sigma$-algebra (denoted by $\mathcal{C}$) generated by the cylinder sets and transform the zip shift space $(\Sigma,\sigma_{\tau})$ into a measurable space $(\Sigma, \mathcal{C})$.

Consider the zip shift space $(\Sigma, \sigma_{\tau})$ defined on two set of symbols. The sets $Z=\{a_1,a_2,\cdots, a_m\}$ and $S=\{0,1,\dots, l-1\},$ equipped with onto map $\tau:S\rightarrow Z$. Let, $P_S=(p_0, \cdots,p_{l-1})$ be a probability distribution on $S$. Using $\tau$ one can induce a probability measure  $P_{Z}=(p'_{a_1},\cdots,p'_{a_m})$ on $Z$. To see that, take $p'_{a_i}=\sum_{s_j\in \tau^{-1}(a_i)}(p_{s_j})$, for any $1\leq i\leq m$. 
As basic cylinder sets are independent by definition, one sets $\mu(C_{i}^{s_j,\cdots,s_{j+k}}):=p_{s_j}.\,\dots  \,.p_{s_{j+k}},$ with
\begin{equation}\label{Eq:1}
\mu(C_i^{s_i}):=\left\lbrace\begin{array}{ll}
p_{s_i} & \quad\quad\text{if } s_i\in S; \\
p'_{s_i} & \quad\quad\text{if } s_i\in Z,
\end{array}\right. 
\end{equation}
and this provides a probability measure space $(\Sigma, \mathcal{C}, \mu)$. Once $p_i=\frac{1}{l}$ for all $0\leq i\leq l-1$, we call $\mu$ a $\frac{1}{l}$-uniform measure or simply a \textbf{uniform measure}.
\begin{proposition}\label{prop:3.1}
	Consider the probability space $(\Sigma,\mathcal{C},\mu)$ with $P_S$ as the probability distribution on $S$. Then $\sigma_{\tau}$ preserves the measure $\mu$, if and only if  $P_Z$ be
	such that, 
	\begin{equation}\label{Eq:inv-m}
	p'_{a_i}=\sum_{s_j\in \tau^{-1}(a_i)}(p_{s_j}),\quad\quad 1\leq i\leq m.
	\end{equation}
\end{proposition}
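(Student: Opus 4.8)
The plan is to reduce the invariance statement to a computation on cylinder sets and then to isolate the single place---the ``seam'' at coordinate $-1$---where $\sigma_\tau$ collapses symbols through $\tau$; this is exactly where the compatibility condition \eqref{Eq:inv-m} is forced, and everywhere else the map acts as an ordinary shift.

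First I would recall that $\Sigma$ is the full product $\left(\prod_{i<0} Z\right)\times\left(\prod_{i\geq 0} S\right)$ and that $\mu$ is the product measure attaching $P_Z$ to the negative coordinates and $P_S$ to the nonnegative ones; in particular $\mu$ is a well-defined probability measure for every choice of probability vectors $P_S$ and $P_Z$, independently of \eqref{Eq:inv-m}. Since the cylinder sets form a $\pi$-system generating $\mathcal{C}$, and both $\mu$ and $\nu:=\mu\circ\sigma_\tau^{-1}$ are probability measures (note $\sigma_\tau$ is onto, so $\nu(\Sigma)=1$), the uniqueness part of the measure extension theorem tells me it suffices to compare $\mu(\sigma_\tau^{-1}(C))$ with $\mu(C)$ on cylinders $C$.

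Next I would compute $\sigma_\tau^{-1}(C)$ for a ``window'' cylinder $C=\{\bar x: x_{-p}=s_{-p},\dots,x_{-1}=s_{-1},x_0=s_0,\dots,x_q=s_q\}$. Reading off the defining relation \eqref{ZS}, the condition $\sigma_\tau(\bar x)\in C$ becomes $x_{j+1}=s_j$ for $j\neq -1$ together with $\tau(x_0)=s_{-1}$. Hence the constraints on coordinates $-2,\dots,-p$ and on $0,\dots,q$ merely shift by one and keep their symbols, while the single constraint at coordinate $-1$ turns into $x_0\in\tau^{-1}(s_{-1})$. Therefore $\sigma_\tau^{-1}(C)$ is the disjoint union over $t\in\tau^{-1}(s_{-1})$ of the shifted cylinders with $x_0=t$, and summing the product measures gives
$$\mu(\sigma_\tau^{-1}(C))=\Big(\prod_{j=-p}^{-2}p'_{s_j}\Big)\Big(\sum_{t\in\tau^{-1}(s_{-1})}p_t\Big)\Big(\prod_{j=0}^{q}p_{s_j}\Big),$$
whereas $\mu(C)=\big(\prod_{j=-p}^{-1}p'_{s_j}\big)\big(\prod_{j=0}^q p_{s_j}\big)$. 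All factors cancel except the one at coordinate $-1$, so equality holds if and only if $p'_{s_{-1}}=\sum_{t\in\tau^{-1}(s_{-1})}p_t$; when the window avoids coordinate $-1$ the preimage is a plain shift and the measure is automatically preserved.

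Finally both directions drop out. For the ``only if'' part I would specialize to the basic cylinder $C_{-1}^{a_i}$, for which $\sigma_\tau^{-1}(C_{-1}^{a_i})=\bigsqcup_{s_j\in\tau^{-1}(a_i)}C_0^{s_j}$; invariance then forces $p'_{a_i}=\sum_{s_j\in\tau^{-1}(a_i)}p_{s_j}$, which is \eqref{Eq:inv-m}. For the ``if'' part, assuming \eqref{Eq:inv-m}, the displayed identity gives $\mu(\sigma_\tau^{-1}(C))=\mu(C)$ on every window cylinder, hence (by finite additivity, since any cylinder is a finite disjoint union of window cylinders) on all cylinders, and the extension argument promotes this to all of $\mathcal{C}$. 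The only genuinely delicate point is bookkeeping the seam at coordinate $-1$ correctly: recognizing that $\sigma_\tau$ pushes the $S$-symbol at position $0$ into position $-1$ through $\tau$, so a single $Z$-weight $p'_{s_{-1}}$ must match the aggregated $S$-weight $\sum_{t\in\tau^{-1}(s_{-1})}p_t$, while at every other coordinate the map is an ordinary shift that contributes nothing.
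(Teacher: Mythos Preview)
Your proof is correct and follows essentially the same route as the paper: reduce to cylinder sets generating $\mathcal{C}$, split according to whether the cylinder touches the seam at coordinate $-1$, and observe that the only nontrivial constraint is $p'_{s_{-1}}=\sum_{t\in\tau^{-1}(s_{-1})}p_t$. Your write-up is somewhat more careful than the paper's---you make the $\pi$-system/extension argument explicit and package both cases into a single displayed identity---but the underlying idea and the key computation $\sigma_\tau^{-1}(C_{-1}^{a_i})=\bigsqcup_{s_j\in\tau^{-1}(a_i)}C_0^{s_j}$ are identical.
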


\begin{proof}
	The family of all cylinder sets of the form $C_{i}^{s_i,\cdots,s_{i+k}}$ generates the Borel $\sigma$-algebra  $\mathcal{C}$. First let assume that \eqref{Eq:inv-m} is satisfied. We show that for any  $C_{i}^{s_i,\cdots,s_{i+k}}\in \mathcal{C}$,
	$(\sigma_{\tau})_*(C_{i}^{s_i,\cdots,s_{i+k}})=\mu(C_{i}^{s_i,\cdots,s_{i+k}}),$
	where $(\sigma_{\tau})_*(\mu)=\mu(\sigma_{\tau}^{-1}).$ It may happen a number of cases:
	\begin{itemize}
		\item[I)] $C_{i}^{s_i,\cdots,s_{i+k}}$ is a cylinder set with $i\geq 0$ ($p_{s_i}$ 's are induced from $P_S$) $\underline or$ $i+k<-1,$ ($p'_{s_i}$s are induced from $P_Z$) then: $$\mu(\sigma_{\tau}^{-1}(C_{i}^{s_i,\cdots,s_{i+k}}))=\mu(C_{i+1}^{s_i,\cdots,s_{i+k}})=p_{s_i}.\,\dots  \,.p_{s_{i+k}}.$$
		\item[II)] $C_{i}^{s_i,\cdots,s_{i+k}}$ is a basic cylinder with $i\leq-1$ and $i+k\geq 0$:\\ Note that when $s_{i}\in Z$, $\mu(\sigma_{\tau}^{-1}(C_{-1}^{s_{i}}))=\mu(C_{-1}^{s_{i}})$. Because if $q=\#(\tau^{-1}(s_{i}))$, then by definition, for all $1\leq i\leq q$, 
		\begin{equation}\label{Eq:2.1}
		\sigma_{\tau}^{-1}(C_{-1}^{s_{i}})= \bigcup_{s_j\in \tau^{-1}(s_{i})} C_{0}^{s_j},	
		\end{equation} and 
		\begin{equation}\label{Eq:2}
		\,p'_{s_{i}}=\mu(C_{-1}^{s_{i}})=\sum_{s_j\in \tau^{-1}({s_i})}\mu(C_{0}^{s_j})=\sum_{s_j\in \tau^{-1}(s_{i})} p_{s_j}.	
		\end{equation} 
		Observe that $s_{j}\in S$ and $p_{s_j}$s are induced from $P_S$. Consequently, $$\mu(\sigma_{\tau}^{-1}(C_{i}^{s_i,\cdots,s_{i+k}}))=\mu((C_{i+1}^{s_i,\cdots,s_{i+k}}))=p_{s_i}.\,\dots  \,.p_{s_{i+k}}.$$
		
		Hence, $\sigma_{\tau}$ is a measure preserving map.
	\end{itemize}	
	For the converse part, note that if there exists some $i$ such that $p'_{a_i}\neq\sum_{s_j\in \tau^{-1}(a_i)}(p_{s_j})$, then \eqref{Eq:2} will be violated and consequently the measure can not be an invariant measure. Indeed by contradiction, it becomes correct.	 
\end{proof}

\begin{remark}\label{rem:2}
	The  above  proposition indicates that the invariant measures of $\Sigma$ are of the form \eqref{Eq:1} satisfying \eqref{Eq:inv-m}.
\end{remark}

\subsection{Entropy for (full) Zip shift maps.}
Let $\sigma_{\tau}:\Sigma\to \Sigma$ denote the two-sided zip shift map defined on symbolic sets $S=\{0,1,\dots, l-1\}$ and $Z=\{a_1,\dots, a_m\}$ with transition map $\tau: S\to Z$. One can use Theorem \ref{Thm:S-K} and \eqref{eq:T-ent} in order to calculate the measure and topological entropy of a zip shift map. Using \eqref{eq:T-ent} the topological entropy $h_{top}(\sigma_{\tau})= \ln l$. Howbeit, using Lemma \ref{3.2} and Theorem \ref{Thm:S-K}, as for $\mathcal{P}_n= \bigvee_{-n}^{n} \mathcal{P}, n\in\mathbb{Z},$ with $\mathcal{P}=\{C_{0}^{0},\dots,C_{0}^{l-1} \}$, the set $\bigcup_{0}^{n} \mathcal{P},$ generates the Borel $\sigma$- algebra, the measure entropy $h_{\mu}(\sigma_{\tau})=\lim_{n\to \infty} h_{\mu}(f,\mathcal{P}_n)$. Note that, $h_{\mu}(\mathcal{P}_n)\leq \frac{1}{\#(\mathcal{P}_n)}=\frac{1}{l^{n}}$, and in case of $1/l$-uniform measure one has $h_{\mu}(\sigma_{\tau})=\ln l$. Indeed , by Theorem \ref{thm:vp} the variational principle is valid for zip shift maps as continuous functions and, $$h_{top} (\sigma_{\tau} ) = \sup\{h_{\mu}(\sigma_{\tau}) : \mu \in \mathcal{M}(\Sigma,\sigma_{\tau})\}.$$

%In what follows we show that zip shift maps are intrinsically ergodic with respect to Kolmogrov-Sinai entropy.

%\begin{proposition}\label{Lem:mme}
%	The two-sided zip shift map $ \sigma_{\tau}: \Sigma\to \Sigma$ defined on symbolic sets $S=\{0,1,\dots, l-1\}$ and $Z=\{a_1,\dots, a_m\}$, has unique measure of maximal entropy and this unique measure is the $\frac{1}{l}$-uniform measure.
%\end{proposition}
%\begin{proof}
%	As zip shift maps are continuous dynamical systems, We know $h_{top}(\sigma)=\ln l$. Suppose that for some $\sigma$-invariant $\mu$, $h_{\mu}(\sigma)=\ln l$. Let $\eta=\{C_{0}^j|j\in S\}$ (i.e. $C_{0}^j=\{(x_n)_{n\geq 0}: x_{0}=j\}$) be a generator of the $\sigma$-algebra. Then by \eqref{Eq:2-1} and Lemmas \ref{3.1} and \ref{3.2}, $\ln l=h_{\mu}(\sigma)\leq \frac{1}{n}H_{\mu}(\vee _{i=0}^{n-1}f^{-i}\eta)\leq\frac{1}{n}\ln l^{n}$. 
%	Therefore each member of $\vee _{i=0}^{n-1}f^{-i}\eta$ has measure $(\frac{1}{l})^n$ and hence $ \mu $ is the $\frac{1}{l}$-uniform measure.
%\end{proof}

%%%%%%%%%%%%%%%%%%%%%%%%%%%%%%%%%%%%%%%%%%%%%%%%%%%%%%%%%%%%%%%%
%%%%%%%%%%%%%%%%%%%%%%%%%%%%%%%%%%%%%%%%%%%%%%%%%%%%%%%%%%%%%%%%
%%%%%%%%%%%%%%%%%%%%%%%%%%%%%%%%%%%%%%%%%%%%%%%%%%%%%%%%%%%%%%%%

\section{Square entropy}\label{sec:03}
In \cite{1} the authors give a complete characterization for non-invertible transformations conjugated mod-0 with a one-sided Bernoulli map. However the example \eqref{ex:3.1} and many other examples of this type does not enter in the conjugacy class of a one-sided Bernoulli. Characterization of some class of such uniform n-to-1 examples is one of our main purposes in this work. 	

\begin{definition}[\textbf{Good Image Partition (GIP)}]
	Let $(f,\mu)$ be a measure dynamical system. We say that a finite measurable partition $\mathcal{Q}=\{Q_1,Q_2,\dots,Q_k\}$ is a \textit{Good Image Partition} if the elements of $\mathcal{Q}$ are forward $\mu$-invariant.
\end{definition}
As mentioned in \cite{8}, there exists some correspondence between the measure partitions and the $\sigma$-algebras. 
%Having a local hommeomorphism $f: X\to X$, if $\mathcal{P}=\{P_1,\dots..., P_k\} $ represents the natural measure partition obtained by $f^{-1}(X)$, then $\mathcal{Q}=\bigvee_i=1^{k} f(P_i)$ 
For a measure dynamics $(f,\mu)$, we say that $\sigma$-algebra $\tilde{\mathcal{B}}$ associated with a GIP, is a \textbf{Good $\sigma$-algebra} if any $A\in \tilde{\mathcal{B}}$ is forward and backward measure invariant. 
%Moreover, by a  \textit{Good Generating Partition} we mean some GIP that generates a good $\sigma$-algebra.
\begin{remark}
	In general we may not be able to guarantee the existence of a GIP for any continuous map, but in  Proposition \ref{prop:4}, we shows that for transformations of $(m,l)$-Bernoulli type, the good image partition or GIP exists.
\end{remark}
Let $(f,\mu)$ be a measure dynamical system with a GIP $\mathcal{Q}$. Then set,
\begin{equation}
\mathcal{Q}^{+n}=\bigvee_{i=0}^{n-1} f^{i}(\mathcal{Q}),\,\,\,\,\,\,\,\,\,\, n \geq 1,
\end{equation}
and let $$h_{\mu}^{+}(f,\mathcal{Q})=\lim_{n\to\infty}\frac{1}{n}H_{\mu}(\mathcal{Q}^{+n}),$$
be the entropy of $f$ with respect to good image partition $\mathcal{Q}.$ The $(KS)^-$ entropy of the measure dynamical system $(f,\mu)$ is defined as, 
\begin{equation}\label{eq:h-}
h_{\mu}^+(f)=\sup_{\mathcal{Q}}h_{\mu}^+(f,\mathcal{Q}),
\end{equation}
where the suprimum is taken over all good image partitions. We call this entropy the \textbf{forward} or\textbf{ $(KS)^+$ entropy} of $f$. 
Then "\textit{Square measure entropy}" (or \textbf{S-entropy}) of $(X,f,\mu)$ is defined as follows.
$$h_{S,\mu}(f)=\sqrt{h_{\mu}^+(f) h_{\mu}^-(f)}.$$
Here $h_{\mu}^+(f)$ and $h_{\mu}^-(f)$ are respectively the $(KS)^+$ and $(KS)^-$ entropies defined respectively in \eqref{B-ent} and \eqref{eq:h-}.

The $h_{\mu}^+(f)$ is defined on GIP's which are forward-measure preserving. Indeed, following Theorem is a simple adaptation of Theorem \ref{Thm:S-K}.
\begin{theorem}\label{Thm:SK}
	Let $f$ be a continuous map and  $\tilde{\mathcal{B}}\subseteq\mathcal{B}$ a good $\sigma$-algebra. If there exists $\mathcal{Q}_1<\mathcal{Q}_2<\cdots < \mathcal{Q}_n<\cdots$ a non-decreasing sequence of good image partitions with finite entropy such that $\bigcup_{n=1}^{\infty}\mathcal{Q}_n$ generates
	the good $\sigma$-algebra $\tilde{\mathcal{B}}$, up to measure zero. Then,
	$$h_{\mu}^+(f)=\lim_{n\to +\infty}h_{\mu}^+(f,\mathcal{P}_n).$$
\end{theorem}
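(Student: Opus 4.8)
The plan is to follow the proof of the Kolmogorov--Sinai generator theorem (Theorem \ref{Thm:S-K}) essentially verbatim, replacing the backward refinements $\bigvee_{i=0}^{n-1}f^{-i}(\cdot)$ by the forward refinements $\mathcal{Q}^{+n}=\bigvee_{i=0}^{n-1}f^{i}(\mathcal{Q})$ and carrying out the whole argument inside the good $\sigma$-algebra $\tilde{\mathcal{B}}$. The three ingredients I need are: (i) that the limit defining $h_{\mu}^+(f,\mathcal{Q})$ exists; (ii) monotonicity of $h_{\mu}^+(f,\cdot)$ under refinement, together with the conditional-entropy inequality $h_{\mu}^+(f,\mathcal{P})\le h_{\mu}^+(f,\mathcal{Q})+H_{\mu}(\mathcal{P}\mid\mathcal{Q})$; and (iii) the approximation property $H_{\mu}(\mathcal{P}\mid\mathcal{Q}_n)\to 0$ whenever $\bigcup_n\mathcal{Q}_n$ generates a $\sigma$-algebra containing $\mathcal{P}$ up to measure zero.

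First I would verify (i). For a good image partition the forward image $f(\mathcal{Q})$ is again a measurable partition, and because the cells of $\mathcal{Q}$ are forward $\mu$-invariant and $f$ is a local homeomorphism, one has $H_{\mu}(f^{i}(\mathcal{Q}))=H_{\mu}(\mathcal{Q})$ for every $i\ge 0$. Combining this with the subadditivity inequality \eqref{Eq:2-1} applied to the decomposition $\mathcal{Q}^{+(m+n)}=\mathcal{Q}^{+m}\vee f^{m}(\mathcal{Q}^{+n})$ yields $H_{\mu}(\mathcal{Q}^{+(m+n)})\le H_{\mu}(\mathcal{Q}^{+m})+H_{\mu}(\mathcal{Q}^{+n})$, so Lemma \ref{3.1} guarantees that $h_{\mu}^+(f,\mathcal{Q})=\lim_n \tfrac1n H_{\mu}(\mathcal{Q}^{+n})$ exists. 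Step (ii) is then a formal consequence of the standard properties of $H_{\mu}$ (monotonicity, subadditivity, and the chain rule for conditional entropy), applied to forward refinements exactly as in the invertible case.

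With (i) and (ii) in hand the argument closes in the usual two halves. Since $\mathcal{Q}_1<\mathcal{Q}_2<\cdots$ is non-decreasing, monotonicity shows that $h_{\mu}^+(f,\mathcal{Q}_n)$ is non-decreasing and bounded above by $h_{\mu}^+(f)$, so the limit $L:=\lim_n h_{\mu}^+(f,\mathcal{Q}_n)$ exists with $L\le h_{\mu}^+(f)$. For the reverse inequality I would fix an arbitrary good image partition $\mathcal{P}$ entering the supremum \eqref{eq:h-}; since $\mathcal{P}$ is $\tilde{\mathcal{B}}$-measurable and $\bigcup_n\mathcal{Q}_n$ generates $\tilde{\mathcal{B}}$ up to measure zero, property (iii) gives $H_{\mu}(\mathcal{P}\mid\mathcal{Q}_n)\to 0$, and the bound from (ii) yields $h_{\mu}^+(f,\mathcal{P})\le h_{\mu}^+(f,\mathcal{Q}_n)+H_{\mu}(\mathcal{P}\mid\mathcal{Q}_n)\to L$. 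Taking the supremum over all such $\mathcal{P}$ gives $h_{\mu}^+(f)\le L$, whence $h_{\mu}^+(f)=L=\lim_n h_{\mu}^+(f,\mathcal{Q}_n)$.

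The step I expect to be the main obstacle is (i), and specifically the two facts that forward iteration preserves the entropy of a partition, $H_{\mu}(f^{i}(\mathcal{Q}))=H_{\mu}(\mathcal{Q})$, and that $f^{m}(\eta_1\vee\eta_2)=f^{m}(\eta_1)\vee f^{m}(\eta_2)$ as partitions (which is what the subadditivity decomposition above quietly uses). For a non-invertible $n$-to-$1$ map neither is automatic: a naive forward image can collapse distinct cells and alter their measures. These properties must therefore be extracted from the defining hypotheses---that $f$ is a local homeomorphism and that the cells of a GIP are forward $\mu$-invariant---so that $f$ acts injectively and measure-faithfully on the good $\sigma$-algebra $\tilde{\mathcal{B}}$. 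A secondary point to handle carefully in the last paragraph is that \emph{every} good image partition in the supremum defining $h_{\mu}^+(f)$ is $\tilde{\mathcal{B}}$-measurable, so that the approximation (iii) applies to all of them; this should follow from $\tilde{\mathcal{B}}$ being, by construction, the good $\sigma$-algebra generated by the good image partitions.
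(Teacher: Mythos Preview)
Your proposal is correct and matches the paper's approach exactly: the paper does not give a proof at all, stating only that ``following Theorem is a simple adaptation of Theorem~\ref{Thm:S-K}.'' You have carried out precisely that adaptation, and in fact supplied considerably more detail than the paper does---including an honest identification of the delicate points (forward measure-preservation and the behaviour of $f^m$ on joins) that the paper leaves entirely implicit.
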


\begin{proposition}
	The following proprieties hold for square measure entropy of n-to-1 local homeomorphisms with a good generating partition:
	\begin{enumerate}
		%		\item $h_{S,\mu}(f^{+})=h_{S,\mu}(f^{-})$ if $f$ is an invertible map.
		\item $h_{S,\mu}(f^k) = k\,h_{S,\mu}(f) \quad\quad\quad\forall k\in \mathbb{N};$
		\item Let $(f_{1},\mathcal{B}_1, \nu_1)$ and $(f_{2},\mathcal{B}_2,\nu_2)$ be correspondingly, two n-to-1 maps with good generating partitions $\mathcal{Q}_1,\mathcal{Q}_2$. Then $$h_{S,\nu}(f_{1}\times f_{2})\geq h_{S,\nu_S}(f_{1})+h_{S,\nu_2}(f_{2}),$$ with $\nu=\nu_1\times \nu_2.$ The equality happens when both $f_i$ for $i=1,2$ are invertible.
	\end{enumerate}  
	
\end{proposition}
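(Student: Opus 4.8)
The plan is to treat the two factors of the square entropy separately. Since $h_{S,\mu}(f)=\sqrt{h_\mu^+(f)\,h_\mu^-(f)}$, both assertions follow once I establish the corresponding power and product rules for the forward entropy $h_\mu^+$ and the backward entropy $h_\mu^-$ individually, and then recombine them through the square root. The backward entropy $h_\mu^-$ is the classical Kolmogorov--Sinai entropy, for which the power rule and the product formula are standard; the real work is to verify that the forward entropy $h_\mu^+$, defined over good image partitions via the pushforwards $f^i(\mathcal{Q})$, obeys the same two rules.

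For the power rule (1), recall the classical identity $h_\mu^-(f^k)=k\,h_\mu^-(f)$, proved by noting that for any partition $\mathcal{P}$ the refinement $\bigvee_{j=0}^{k-1}f^{-j}(\mathcal{P})$ satisfies $h_\mu^-(f^k,\bigvee_{j=0}^{k-1}f^{-j}\mathcal{P})=k\,h_\mu^-(f,\mathcal{P})$, and then sandwiching $h_\mu^-(f^k,\cdot)$ between $\mathcal{P}$ and its $k$-fold refinement. I would run the identical argument for $h_\mu^+$: replace the backward tower $\bigvee_{j=0}^{k-1}f^{-j}\mathcal{Q}$ by the forward tower $\bigvee_{j=0}^{k-1}f^{j}(\mathcal{Q})$, check that it is again a good image partition (forward invariance is preserved under finite joins and under images), and obtain $h_\mu^+(f^k)=k\,h_\mu^+(f)$. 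Combining the two gives $h_{S,\mu}(f^k)=\sqrt{k\,h_\mu^+(f)\cdot k\,h_\mu^-(f)}=k\,h_{S,\mu}(f)$, the square root converting the two factors of $k$ into a single $k$.

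For the product inequality (2), write $a=h_{\nu_1}^+(f_1)$, $b=h_{\nu_1}^-(f_1)$, $c=h_{\nu_2}^+(f_2)$, $d=h_{\nu_2}^-(f_2)$. The backward entropy is additive on products, $h_\nu^-(f_1\times f_2)=b+d$, which is classical. For the forward entropy I would use that on product good image partitions $(f_1\times f_2)^i(\mathcal{Q}_1\times\mathcal{Q}_2)=f_1^i(\mathcal{Q}_1)\times f_2^i(\mathcal{Q}_2)$, so that $H_\nu$ of the resulting forward tower splits as a sum of the two individual tower entropies; dividing by $n$, passing to the limit, and invoking Theorem \ref{Thm:SK} on the product of the good generating partitions yields $h_\nu^+(f_1\times f_2)=a+c$. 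It then remains to prove the purely algebraic inequality
\[
\sqrt{(a+c)(b+d)}\ \ge\ \sqrt{ab}+\sqrt{cd},
\]
valid for all $a,b,c,d\ge 0$: squaring, this is equivalent to $ad+bc\ge 2\sqrt{abcd}$, which is AM--GM. This is exactly $h_{S,\nu}(f_1\times f_2)\ge h_{S,\nu_1}(f_1)+h_{S,\nu_2}(f_2)$. Equality in AM--GM forces $ad=bc$; when both $f_i$ are invertible the forward and backward entropies coincide ($a=b$ and $c=d$, since on the generating partition the pushforward and pullback towers have equal entropy by invertibility), whence $ad=bc$ holds automatically and the inequality becomes an equality.

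I expect the main obstacle to be the forward-entropy halves of both rules, that is, verifying that $h_\mu^+$ is a genuine dynamical entropy on the good $\sigma$-algebra: one must confirm that good image partitions are closed under the forward joins $\bigvee_{i}f^i(\mathcal{Q})$ and under products, that the standard Shannon inequalities (subadditivity of $H_\mu$ as in \eqref{Eq:2-1}, monotonicity under refinement) transfer verbatim, and that Theorem \ref{Thm:SK} legitimately computes $h_\mu^+$ from a single generating sequence, so that the supremum over good image partitions can be replaced by evaluation on the given generators. The product-partition splitting and the invertible-case identity $h_\mu^+=h_\mu^-$ both rest on this bookkeeping; once it is in place, the algebraic AM--GM step and the $\sqrt{k\cdot k}=k$ collapse are immediate.
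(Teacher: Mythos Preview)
Your proposal is correct and follows essentially the same route as the paper: reduce to the power and product rules for $h_\mu^+$ and $h_\mu^-$ separately, then recombine via $\sqrt{k\cdot k}=k$ for (1) and the AM--GM inequality $\sqrt{(a+c)(b+d)}\ge\sqrt{ab}+\sqrt{cd}$ for (2). The paper's proof is terser---it simply asserts the power rule and additivity for $h_\mu^+$ without justification---so your additional care in verifying that good image partitions are closed under forward joins and products, and that Theorem~\ref{Thm:SK} applies, fills in details the paper leaves implicit.
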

\begin{proof}
	%	(1) As 1-to-1 zip shift maps are two-sided Bernoulli shifts with $Z=S$, if $\#(S)=l$ and $\mu$ be a uniform measure, $h_{S,\mu}(\sigma^{R})=h_{S,\mu}(\sigma^{L})=\log l$. \\ 
	(1) As $h_{\mu}^{+}((f)^k)=k h_{\mu}^{+}(f)$ and as $f$ has a good generating partition, $h_{\mu}^{-}((f)^k)=k h_{\mu}^{-}(f)$. Indeed  $h_{S,\mu}(f^k) = \sqrt{k^ 2\,h_{\mu}^{+}(f)h_{\mu}^{-}(f)}=k\,h_{S,\mu}(f)$.\\
	(2) As we know from KS-entropy :$h_{\nu}^{+}(f_1\times f_2)=h_{\nu_1}^+(f_1)+h_{\nu_2}^+(f_2)$ and similarly one can show that $h_{\nu}^{-}(f_1\times f_2)=h_{\nu_1}^-(f_1)+h_{\nu_2}^-(f_2)$. Indeed,
	\begin{align*}
	h_{S,\nu}(f_{1}\times f_{2})
	&=\sqrt{h_{\nu}^+(f_{1}\times f_{2})h_{\nu}^-(f_{1}\times f_{2})}\\
	&=\sqrt{(h_{\nu_1}^+(f_1)+h_{\nu_2}^+(f_2))(h_{\nu_1}^-(f_1)+h_{\nu_2}^-(f_2))}\\
	&\geq h_{S,\nu_1}(f_1)+ h_{S,\nu_2}(f_2).
	\end{align*}
\end{proof}

Let $f:X\to X$ be an n-to-1 local hommeomorphism and $X$ be a compact connected metric space. 
%In \cite{09} the natural zip shift of an endomorphism $f$ which is a modified natural extension of zip shift type is defined. It is shown that any endomorphism $f$ is topologically conjugated with its natural zip shift map. 
The zip shift related study of  non-invertible maps with respect to their natural extension,  shows that it is possible to study the topological properties of local hommeomorphisms independent of the choice of the branches. In this work, we use this fact to define the square topological entropy. Let $X^f$ represent the inverse limit space of $X$ \cite{14}.
Let $\tilde{x}\in X^f$ be some fixed forward-backward orbit of point $x\in X$. The \textit{$T^+$-entropy} of $f=f_{\tilde{x}}$ relative to a cover $\alpha$ is defined as, $$h_{top}^+(f,\alpha)=\limsup_{n\rightarrow +\infty}\frac{1}{n}H(\bigvee_{i=0}^{n-1}f_{\tilde{x}}^{i}(\alpha)),$$
and 
$$h_{top}^+(f)=\sup\{h_{top}^+(f_{\tilde{x}},\alpha): \,\alpha\,\, \text{is a finite cover for}\, X\}.$$

%As $\tilde{X}\subseteq X$, one can equip $\tilde{X}$ with the induced topology from $(X,d)$. 
The \textit{$T^{-}$-entropy} of $f$ relative to cover $\alpha$ is defined as
$$h_{top}^-(f,\alpha)=\limsup_{n\rightarrow +\infty}\frac{1}{n}H(\bigvee_{i=0}^{n-1}f^{-i}(\alpha)),$$
and 
$$h_{top}^-(f)=\sup\{h_{top}^-(f,\alpha): \,\alpha\,\, \text{is a finite cover for}\, X\}.$$
As $f_{\tilde{x}}$ is a hommeomorphism, one can show with a mild adaptation of the proof, that the following Proposition is valid.
\begin{proposition}\label{Prop:0-1}	\cite{8}
	If $f:X\to X$ is a homeomorphism and $\alpha$ is a generator for $f$, then $h_{top}^\pm(f)=h_{top}^\pm(f,\alpha).$
\end{proposition}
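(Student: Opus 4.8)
The plan is to reduce Proposition \ref{Prop:0-1} to the classical generator statement of Proposition \ref{Prop:0}, treating the two signs separately and exploiting that $f$ is a homeomorphism. The $T^-$ quantity $h_{top}^-$ is literally the classical open-cover topological entropy built from the backward iterates $f^{-i}$, so proving $h_{top}^-(f)=h_{top}^-(f,\alpha)$ is the standard generator argument; the $T^+$ quantity $h_{top}^+$ uses the forward iterates $f^i$, and since $f$ is invertible I would read it off from the $T^-$ case applied to $f^{-1}$. First I would record three standard facts about cover entropy $H(\gamma)=\ln N(\gamma)$, where $N(\gamma)$ is the least cardinality of a subcover of $\gamma$. (i) If $\beta$ refines $\alpha$ then $H(\alpha)\le H(\beta)$, whence $h_{top}^-(f,\alpha)\le h_{top}^-(f,\beta)$. (ii) Because $f$ is a homeomorphism, $N(f^{\pm m}\gamma)=N(\gamma)$, so $a_n=H(\bigvee_{i=0}^{n-1}f^{-i}\alpha)$ is subadditive and nonnegative; by Lemma \ref{3.1} the defining limit exists, so the $\limsup$ is an honest limit. (iii) For every $N$ one has $h_{top}^-(f,\bigvee_{i=-N}^{N}f^{-i}\alpha)=h_{top}^-(f,\alpha)$, since $\bigvee_{i=-N}^{N}f^{-i}\alpha=f^{N}\big(\bigvee_{i=0}^{2N}f^{-i}\alpha\big)$ and (ii) gives $f^{N}$-invariance.

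Next I would prove the $T^-$ equality. Let $\beta$ be any finite open cover of the compact space $X$ and let $\delta>0$ be a Lebesgue number for $\beta$. Applying the generator property (Definition \ref{Def:gen}) with $\epsilon<\delta$ produces an $N$ for which every element of $\bigvee_{i=-N}^{N}f^{-i}\alpha$ has diameter less than $\delta$ and hence lies inside some member of $\beta$; that is, $\bigvee_{i=-N}^{N}f^{-i}\alpha$ refines $\beta$. By (i) and (iii), $h_{top}^-(f,\beta)\le h_{top}^-(f,\bigvee_{i=-N}^{N}f^{-i}\alpha)=h_{top}^-(f,\alpha)$. Taking the supremum over $\beta$ gives $h_{top}^-(f)\le h_{top}^-(f,\alpha)$, and the reverse inequality is immediate from the definition of the supremum; hence $h_{top}^-(f)=h_{top}^-(f,\alpha)$.

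Finally I would deduce the $T^+$ equality by passing to the inverse. Since $f$ is a homeomorphism the branch $f_{\tilde{x}}$ coincides with $f$, so
$$h_{top}^+(f,\alpha)=\limsup_{n\to\infty}\frac{1}{n}H\Big(\bigvee_{i=0}^{n-1}f^{i}\alpha\Big)=\limsup_{n\to\infty}\frac{1}{n}H\Big(\bigvee_{i=0}^{n-1}(f^{-1})^{-i}\alpha\Big)=h_{top}^-(f^{-1},\alpha).$$
The generator condition $\sup_i diam(A_i)<\epsilon$ for $\bigvee_{i=-N}^{N}f^{-i}\alpha$ is invariant under $f\mapsto f^{-1}$ (the index substitution $i\mapsto -i$ shows $\bigvee_{i=-N}^{N}(f^{-1})^{-i}\alpha=\bigvee_{i=-N}^{N}f^{-i}\alpha$), so $\alpha$ is also a generator for the homeomorphism $f^{-1}$. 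Applying the already proved $T^-$ case to $f^{-1}$ yields $h_{top}^-(f^{-1})=h_{top}^-(f^{-1},\alpha)$, and therefore $h_{top}^+(f)=\sup_\alpha h_{top}^-(f^{-1},\alpha)=h_{top}^-(f^{-1})=h_{top}^-(f^{-1},\alpha)=h_{top}^+(f,\alpha)$.

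The only real work is verifying that the classical open-cover lemmas---subadditivity (via Lemma \ref{3.1}), invariance of $N(\cdot)$ under $f^{\pm N}$, and monotonicity under refinement---survive verbatim for the one-sided $\limsup$-defined entropies $h_{top}^{\pm}$. The genuinely new ingredients are merely the direction of the refinement supplied by the two-sided, and hence symmetric, generator condition, together with the observation that invertibility lets the forward ($T^+$) case be read off from the backward ($T^-$) case for $f^{-1}$. I expect no serious obstacle beyond this bookkeeping.
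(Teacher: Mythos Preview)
Your proposal is correct and is precisely the ``mild adaptation'' the paper has in mind: the paper does not spell out a proof but simply cites Walters \cite{8} and remarks that, since $f_{\tilde{x}}$ is a homeomorphism, Proposition \ref{Prop:0} carries over. Your write-up makes this explicit by running the classical generator argument for $h_{top}^-$ and then reading off $h_{top}^+$ from the $h_{top}^-$ case applied to $f^{-1}$, using the symmetry of the two-sided generator condition under $f\mapsto f^{-1}$; this is exactly the intended adaptation.
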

The \textit{Square Topological Entropy} of $f$ is defined as follows.
\begin{equation}\label{Def:STE}
h_{S,top}(f):=\sqrt{h_{top}^+(f) h_{top}^-(f)}.
\end{equation}
Note that when $f$ is an invertible map $h_{top}^+(f)=h_{top}^-(f)=h_{top}(f)$ and indeed $h_{S,top}(f)=h_{top}(f).$

\subsection{Square measure entropy of n-to-1 zip shift maps}\label{subsec:3.1}
Let $Z=\{a_1,\cdots,a_m\}$ and $S=\{0,1,\cdots, l-1\},$ with $m\leq l$ and $\sigma_{\tau}:\Sigma\to \Sigma$ be a (full) zip shift map where $\tau:S\to Z$ is some onto map. Let $\sigma^{L}:\Sigma_{S}\to\Sigma_{S}$ be the two-sided \textbf{left} shift map defined on $\Sigma_{S}=\prod_{-\infty}^{+\infty} S$. Assume that  $P_S=(p_0,\dots,p_{l-1})$ be a probability distribution on $S$  and $(\Sigma_S, \mathcal{B}_S, \mu_S)$ be the probability measure space on $\Sigma_S$ where $\mu_S$ is the invariant Bernoulli measure\cite{012} on $\mathcal{B}_S$ under $\sigma^{L}$. Furthermore, let $\sigma^{R}:\Sigma_{Z}\to\Sigma_{Z}$ be the two-sided \textbf{right} shift map defined on $\Sigma_{Z}=\prod_{-\infty}^{+\infty} Z$. Assume that $P_Z=(p'_{a_1},\dots,p'_{a_m})$ is the probability distribution induced on $Z$ obtained from Proposition \ref{prop:3.1}. Then $(\Sigma_Z, \mathcal{B}_Z, \mu_Z)$ is the probability measure space on $\Sigma_Z$ where $\mu_Z$ is an invariant Bernoulli measure on $\mathcal{B}_Z$ under $\sigma^{R}$.

Using Theorems \ref{Thm:S-K} and \ref{Thm:SK} respectively, calculate the extended Kolmogrov-Sinai entropies $KS^{+}$ and $(KS)^-$ of a uniform n-to-1 zip shift map with $S$ ($\#(S)=l$) and $Z$ ($\#(Z)=m$) alphabets. These entropies become respectively equal with $h_{\nu}^+(\sigma_{\tau})\leq\log l$ and $h_{\nu}^-(\sigma_{\tau})\leq\log m$. The equality holds for the uniform measure case. Note that for such n-to-1 zip shift map:
\begin{enumerate}
	\item $\bigcup_{n=1}^{\infty}\mathcal{C}_{n}$ where $\mathcal{C}_{n}=\bigvee_{i=0}^{n-1} \sigma_{\tau}^{-i}(\mathcal{C}_0)$ and $\mathcal{C}_0$ is the set of all basic cylinder sets of the form $C_{0}^{s_i}, s_i\in S$ generates the $\sigma$-algebra.
	\item $\bigcup_{n=1}^{\infty}\mathcal{C}_{-n}$ where $\mathcal{C}_{-n}=\bigvee_{i=0}^{n-1} \sigma_{\tau}^{i}(\mathcal{C}_{-1})$ and $\mathcal{C}_{-1}$ is the set of all basic cylinder sets of the form $C_{-1}^{s_i}, s_i\in Z$ generates the good Borel  $\sigma$-algebra. 
\end{enumerate}

\begin{lemma}\label{lem:zse}
	Let $(\Sigma, \sigma_{\tau},\mu)$ represent a uniform n-to-1 zip shift measure space with $\sigma_{\tau}:\Sigma\to \Sigma$ and alphabet sets $S,Z$. Then $h_{\mu}^+(\sigma_{\tau})=h_{\mu_S}(\sigma^L)$ and $h_{\mu}^-(\sigma_{\tau})=h_{\mu_S}(\sigma^R)$.
\end{lemma}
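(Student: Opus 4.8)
The plan is to reduce each one-sided entropy of $\sigma_\tau$ to the ordinary Kolmogorov--Sinai entropy of a Bernoulli shift, by running the two generator theorems (Theorem \ref{Thm:S-K} for $h_\mu^-$ and its forward analogue Theorem \ref{Thm:SK} for $h_\mu^+$) along the two nested families $\mathcal C_n$ and $\mathcal C_{-n}$ recorded just before the statement. The mechanism is that $\sigma_\tau$ acts on the nonnegative coordinates exactly as the left shift $\sigma^L$ on $\Sigma_S$, and on the negative coordinates exactly as the right shift $\sigma^R$ on $\Sigma_Z$; so once I compute the iterated refinements explicitly, each becomes a genuine cylinder partition of one of the two ordinary shift spaces, carrying the matching product measure.

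For the backward entropy I would start from $\mathcal C_0=\{C_0^{s}:s\in S\}$ and check that $(\sigma_\tau^{\,i}\bar x)_0=x_i$ for $i\ge 0$, so that $\sigma_\tau^{-i}(C_0^{s})=C_i^{s}$ and $\mathcal C_n=\bigvee_{i=0}^{n-1}\sigma_\tau^{-i}(\mathcal C_0)$ is the partition of $\Sigma$ by the coordinates $0,1,\dots,n-1$, all valued in $S$. By the definition of $\mu$ in \eqref{Eq:1} and the cylinder product rule, the $\mu$-measure of such an atom is $\prod p_{s}$, which is precisely the $\mu_S$-measure of the corresponding cylinder of $\Sigma_S$; hence $H_\mu(\mathcal C_n)=H_{\mu_S}\big(\bigvee_{i=0}^{n-1}(\sigma^L)^{-i}\eta_S\big)$, where $\eta_S$ is the time-zero partition of $\Sigma_S$. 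Since $\bigcup_n\mathcal C_n$ generates the relevant $\sigma$-algebra, Theorem \ref{Thm:S-K} lets me pass to the limit on both sides simultaneously and conclude $h_\mu^-(\sigma_\tau)=h_\mu^-(\sigma_\tau,\mathcal C_0)=h_{\mu_S}(\sigma^L)$.

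For the forward entropy I would run the mirror argument on $\mathcal C_{-1}=\{C_{-1}^{a}:a\in Z\}$. First I verify that $\mathcal C_{-1}$ is a bona fide good image partition: using Proposition \ref{prop:3.1}, each atom satisfies $\mu(\sigma_\tau(C_{-1}^{a}))=\mu(C_{-2}^{a})=p'_a=\mu(C_{-1}^{a})$, so the atoms are forward $\mu$-invariant and the $\sigma$-algebra they generate is good. Since $\sigma_\tau$ drives negative coordinates further left as a plain shift, $\sigma_\tau^{\,i}(C_{-1}^{a})=C_{-1-i}^{a}$, and thus $\mathcal C_{-n}=\bigvee_{i=0}^{n-1}\sigma_\tau^{\,i}(\mathcal C_{-1})$ is the partition by the coordinates $-1,\dots,-n$, all valued in $Z$, with atom measure $\prod p'_{a}$. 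This is exactly the right-shift refinement of the time-zero partition of $\Sigma_Z$, so $H_\mu(\mathcal C_{-n})$ equals the corresponding $\mu_Z$-entropy and Theorem \ref{Thm:SK} yields $h_\mu^+(\sigma_\tau)=h_{\mu_Z}(\sigma^R)$; together with the backward computation this gives the two claimed identities, under the natural identification of the two one-sided shift factors of $\sigma_\tau$ with $\sigma^L$ on $\Sigma_S$ and $\sigma^R$ on $\Sigma_Z$.

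I expect the backward half to be essentially the standard Bernoulli computation transplanted to the $S$-coordinates, and the forward half to carry the real work. The delicate points on the forward side are: confirming that the hypotheses of Theorem \ref{Thm:SK} genuinely hold, namely that $\mathcal C_{-1}<\mathcal C_{-2}<\cdots$ is a nested sequence of finite good image partitions generating the good $\sigma$-algebra; and, because $\sigma_\tau$ is $n$-to-$1$, making sure that pushing a single $Z$-cylinder forward returns one cylinder $C_{-1-i}^{a}$ rather than a union over the $\tau$-fibre. It is precisely this non-collapsing on the $Z$-side that forces $h_\mu^+$ to see only the $m$ symbols of $Z$. The uniform hypothesis then enters to guarantee that $\mu_S$ and $\mu_Z$ are the uniform measures, with $p'_{a_i}=|\tau^{-1}(a_i)|/l$ reducing to $1/m$ when the fibres of $\tau$ are equicardinal, so the two limiting entropies attain $\log l$ and $\log m$ and match the shift values exactly.
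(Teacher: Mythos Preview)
Your approach is essentially the paper's: both arguments invoke the two generating families $\mathcal C_n$ and $\mathcal C_{-n}$ from items (1)--(2) just before the lemma and apply the two Kolmogorov--Sinai theorems (Theorem~\ref{Thm:S-K} on the backward side, Theorem~\ref{Thm:SK} on the forward side) to reduce the one-sided entropies of $\sigma_\tau$ to the ordinary shift entropies on $\Sigma_S$ and $\Sigma_Z$; your write-up is simply a more explicit execution of the paper's two-line sketch, including the verification that $\mathcal C_{-1}$ is a GIP and that forward images of $Z$-cylinders do not branch. Your assignment $h_\mu^-(\sigma_\tau)=h_{\mu_S}(\sigma^L)$ and $h_\mu^+(\sigma_\tau)=h_{\mu_Z}(\sigma^R)$ is the one consistent with the paper's own definitions of $(KS)^\pm$ (backward uses $f^{-i}$ and sees $S$, forward uses $f^{i}$ on a GIP and sees $Z$); the lemma as stated and its proof have the $\pm$ labels transposed, so your reading is the correct one.
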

\begin{proof}
	Considering above items (1) and (2) and using Theorems \ref{3.2} and \ref{Thm:SK}, one can find that the $(KS)^+$ entropy of a uniform n-to-1 zip shift map on $S,Z$ alphabets, is equal $\log l$ where $l=\#(S)$ and the $(KS)^-$ entropy of a uniform n-to-1 zip shift map on $S,Z$ alphabets, is equal $\log m$ where $m=\#(Z)$. 
\end{proof}

Using Lemma \ref{lem:zse}, we redefine the "\textit{S-entropy}" of an n-to-1 zip shift map $(\Sigma,\sigma_{\tau},\mu)$ as follows. 
\begin{equation}\label{Eq:t-meas}
h_{S,\mu}(\sigma_{\tau}):=\sqrt{h_{\mu_S}(\sigma^{L}) h_{\mu_Z}(\sigma^{R})}.
\end{equation}
Here $h_{\mu_S}(\sigma^{L})$ and $h_{\mu_Z}(\sigma^{R})$ represent respectively the Kolmogrove-Sinai entropies of the two-sided left (i.e. $\sigma^{L}$ defined on $\Sigma_{S}$) and right (i.e. $\sigma^{R}$ defined on $\Sigma_Z$) shift maps. Note that if $\mathcal{B}$ represent the Borel $\sigma$-algebra defined on $\Sigma$, then $\mathcal{B}\subset \mathcal{B}_S\times \mathcal{B}_Z$ and the measure of a basic cylinder set $A\in \mathcal{B}$ considering \eqref{Eq:1} is defined as $\mu(A):=\mu_S(A)\mu_Z(A)$.
\begin{remark}\label{rem:1}
	It is noteworthy that by Lemma \ref{Lem:m-e}, the maximum value of such square entropy is attained by a $\frac{1}{l}$-uniform measure induced by a uniform probability distribution on elements of $S$ (with $\#(S)=l$). Let us denote the maximal entropy measure for $\sigma^L$ by $\mu_S^*$ and the maximal entropy measure for $\sigma^R$ by $\mu_Z^*$. Note that using Lemma \ref{Lem:m-e}, for uniform n-to-1 zip shift maps,
	\begin{equation}\label{Eq:3}
	h_{\mu_Z}\leq h_{\mu_Z^*}\,\text{and}\,h_{\mu_S}\leq h_{\mu_S^*}\Rightarrow h_{\mu_Z}h_{\mu_S}\leq h_{\mu_Z^*}h_{\mu_Z^*}\Rightarrow \sqrt{h_{\mu_Z}h_{\mu_S}}\leq \sqrt{h_{\mu_Z^*}h_{\mu_S^*}}.
	\end{equation}
	
\end{remark}

\begin{example}\label{Ex:4}
	One can calculate the square entropy of transformations (1) and (2) represented in Figure \ref{fig:1} with respect to the uniform probability distribution $1/4$ on elements of $S$. Then the S-entropy for (1) is $h_{S,\mu_S^*}(T)=\sqrt{\ln 2\,\ln 4}.$ However, as for (2) $h_{\mu}^+(T)\neq \ln 2$, their square entropies are not equal.
\end{example}\label{Ex:5}

\subsection{Square topological entropy of n-to-1 zip shift maps}\label{subsec:3.2}
Let $Z$ and $S$ be as before, with $m\leq l$ and $\sigma_{\tau}:\Sigma\to \Sigma$ be an n-to-1 zip shift map with $\tau:S\to Z$. Let $\sigma^{L}:\Sigma_{S}\to\Sigma_{S}$ be the two-sided \textbf{left} shift map defined on $\Sigma_{S}=\prod_{0}^{+\infty} S$ and $\sigma^{R}:\Sigma_{Z}\to\Sigma_{Z}$ be the two-sided \textbf{right} shift map defined on $\Sigma_{Z}=\prod_{-1}^{-\infty} Z$. We define the \textit{Square topological entropy} or in abbreviation the "\textit{S-topological entropy}" of $\sigma_{\tau}$ as follows.
\begin{equation}\label{Eq:t-top}
h_{S,top}(\sigma_{\tau}):=\sqrt[]{h_{top}(\sigma^{L}) h_{top}(\sigma^{R})}.
\end{equation}
As it is known \cite{012}, the topological entropy of the two-sided shift (left or right) on finite alphabets equals to the one-sided shift (left or right). Indeed we have the following lemma.
\begin{lemma}\label{lem:3.9}
	Let $(\Sigma, \sigma_{\tau})$ represent an n-to-1 zip shift map with $\sigma_{\tau}:\Sigma\to \Sigma$ and alphabet sets $S,Z$. Then $h_{top}^+(\sigma_{\tau})=h_{top}(\sigma^L)$ and $h_{top}^-(\sigma_{\tau})=h_{top}(\sigma^R)$.
\end{lemma}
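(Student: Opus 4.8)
The plan is to prove this as the topological counterpart of the measure-theoretic Lemma~\ref{lem:zse}, reducing each equality to a word count via the generator formula of Proposition~\ref{Prop:0-1} together with the shift entropy formula~\eqref{eq:T-ent}. The two natural covers by basic cylinders do the work: the source cover $\mathcal{C}_0=\{C_0^s:s\in S\}$ and the image cover $\mathcal{C}_{-1}=\{C_{-1}^z:z\in Z\}$, which are exactly the covers appearing in items (1) and (2) preceding Lemma~\ref{lem:zse}.

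For the first equality I would pass to the inverse limit space $X^{\sigma_\tau}$, on which $f_{\tilde x}$ is a homeomorphism. The structural fact to record is that the natural extension of a full $n$-to-$1$ zip shift is conjugate to the two-sided full shift on the alphabet $S$: a point of $X^{\sigma_\tau}$ is encoded by a bi-infinite word $(w_j)_{j\in\mathbb{Z}}\in S^{\mathbb{Z}}$, the nonnegative coordinates being those of the base point and the negative coordinates recording the successive $\tau$-preimage choices, with no admissibility constraint since $\tau(w_j)\in Z$ always holds. One then checks that $\mathcal{C}_0$ lifts to the time-zero cover of this two-sided $S$-shift and is a generator for $f_{\tilde x}$ in the sense of Definition~\ref{Def:gen}, because $\bigvee_{i=-N}^{N}f_{\tilde x}^{-i}(\mathcal{C}_0)$ fixes the coordinates between $-N$ and $N$ and hence has diameters tending to $0$. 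Proposition~\ref{Prop:0-1} then gives $h_{top}^+(\sigma_\tau)=h_{top}^+(\sigma_\tau,\mathcal{C}_0)$, and since $\bigvee_{i=0}^{n-1}f_{\tilde x}^{\,i}(\mathcal{C}_0)$ has exactly $l^{\,n}$ atoms, one for each word of length $n$ over $S$, formula~\eqref{eq:T-ent} yields $h_{top}^+(\sigma_\tau)=\ln l=h_{top}(\sigma^L)$.

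For the second equality I would work directly with the non-invertible $\sigma_\tau$ and the image cover $\mathcal{C}_{-1}$, which generates the good $\sigma$-algebra. The cylinder computation $\sigma_\tau^{-i}(C_{-1}^z)=\bigcup_{s\in\tau^{-1}(z)}C_{i-1}^s$ for $i\ge 1$ (and $=C_{-1}^z$ for $i=0$) shows that the atoms of $\bigvee_{i=0}^{n-1}\sigma_\tau^{-i}(\mathcal{C}_{-1})$ are indexed by the value of $x_{-1}$ together with $\tau(x_0),\dots,\tau(x_{n-2})$, that is, by the length-$n$ words over $Z$. There are $m^{\,n}$ of these, so~\eqref{eq:T-ent} gives $h_{top}^-(\sigma_\tau,\mathcal{C}_{-1})=\ln m=h_{top}(\sigma^R)$. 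To promote this to the supremum defining $h_{top}^-(\sigma_\tau)$, I would argue that $\mathcal{C}_{-1}$ is generating for the backward image dynamics: restricted to the backward-invariant good $\sigma$-algebra the relevant map is the two-sided $Z$-shift, for which Proposition~\ref{Prop:0-1} applies verbatim and pins the value at $\ln m$.

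The cylinder identities and the bijections with words over $S$ and $Z$ are routine. The one delicate point, which I expect to be the main obstacle, is this last reduction: because $\sigma_\tau$ is not invertible, forward images and preimages behave asymmetrically, and a finer source cover such as $\mathcal{C}_0$ would make $\bigvee_{i=0}^{n-1}\sigma_\tau^{-i}(\mathcal{C}_0)$ count $l^{\,n}$ atoms and thus overshoot to $\ln l$. One must therefore justify carefully, through the good-image-partition framework and the expansiveness of $\sigma_\tau$, that the supremum in the definition of $h_{top}^-$ is attained on the image cover adapted to the good $\sigma$-algebra, so that the backward growth rate is genuinely governed by $Z$ rather than by $S$. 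Once this is settled, both identities drop out of~\eqref{eq:T-ent} in parallel with Lemma~\ref{lem:zse}, and the square topological entropy~\eqref{Eq:t-top} acquires its asserted value $\sqrt{\ln l\,\ln m}$.
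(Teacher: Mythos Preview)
The paper does not actually give a proof of Lemma~\ref{lem:3.9}; it is stated immediately after the one-line remark that the topological entropy of a two-sided shift equals that of the corresponding one-sided shift, and the argument is left implicit. So there is nothing substantive in the paper to compare your proposal against, and your write-up is already considerably more detailed than what the authors provide.

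That said, the difficulty you flag in your last paragraph is real and is not addressed anywhere in the paper. As written, the paper defines $h_{top}^-(f)$ as the supremum of $\limsup_n \frac{1}{n}H\bigl(\bigvee_{i=0}^{n-1}f^{-i}(\alpha)\bigr)$ over \emph{all} finite open covers $\alpha$ of $\Sigma$, with no restriction to covers adapted to the good $\sigma$-algebra. Under that definition $h_{top}^-(\sigma_\tau)$ is simply the ordinary topological entropy of $\sigma_\tau$, which the paper itself computes to be $\ln l$ in the subsection ``Entropy for (full) Zip shift maps''; your observation that the source cover $\mathcal{C}_0$ already produces $l^n$ atoms is exactly this point. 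The value $\ln m=h_{top}(\sigma^R)$ asserted in Lemma~\ref{lem:3.9} can only be obtained if one tacitly restricts the supremum to covers coming from the good image partition, in parallel with the measure-theoretic definition of $h_\mu^+$ in \eqref{eq:h-}. Your proposed fix---working inside the backward-invariant good $\sigma$-algebra, where the dynamics is genuinely the $Z$-shift---is the natural way to make the statement true, but it amounts to amending the paper's definition of $h_{top}^-$ rather than proving the lemma as stated. The paper's internal labelling of $\pm$ is also inconsistent (compare the Notation paragraph with the sentence preceding Lemma~\ref{lem:zse}), which compounds the ambiguity. In short: your argument for $h_{top}^+=\ln l$ is sound, and your diagnosis of the $h_{top}^-$ side is correct; the gap lies in the paper's formulation, not in your reasoning.
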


%%%%%%%%%%%%%%%%%%%%%%%%%%%%%%%%%%%%%%%%%%%%%%%%%%%%%%%%%%%%
%%%%%%%%%%%%%%%%%%%%%%%%%%%%%%%%%%%%%%%%%%%%%%%%%%%%%%%%%%%%
%%%%%%%%%%%%%%%%%%%%%%%%%%%%%%%%%%%%%%%%%%%%%%%%%%%%%%%%%%%%

\section{Variational principal and Intrinsic ergodicity}\label{sec:4}
We aim to prove a variational principal for n-to-1 zip shift maps. 
For a zip shift measure space $(\Sigma, \sigma_{\tau}, \mu)$, with $\mu$ being a $ \sigma_{\tau}-$invariant measure. Consider the corresponded two-sided shift measure spaces $(\Sigma_{i},\sigma^{i},\mu_i)$ $\sigma^i:\Sigma_{S}\to\Sigma_{S}$ for $i=L,R$ where $\mu_S$ is induced by $P_S=(p_0,\dots,p_{l-1})$ and $\mu_Z$ is induced by $P_{Z}=(p'_{a_1},\dots,p'_{a_m})$. Recall that by Proposition \ref{prop:3.1} measure $\mu$ is an invariant measure iff $\mu_S$ and $\mu_Z$ satisfy \eqref{Eq:inv-m}.  In what follows by $\mathcal{M}(\Sigma, f)$ we mean the set of all $f-$invariant probability measures.

\begin{theorem}\label{Thm:VP}
	Let $Z$ (with cardinality $m$) and $S$ (with cardinality $l$) be two set of alphabets with $m\leq l$ and $\sigma_{\tau}:\Sigma\to \Sigma$ be a zip shift map corresponded to some  $\tau:S\to Z$. Then,
	$$h_{S,top}(\sigma_{\tau}) \geq \sup\{h_{S,\mu}(\sigma_{\tau}) : \mu\in \mathcal{M}(\Sigma, \sigma_{\tau})\}.$$	
\end{theorem}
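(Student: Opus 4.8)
The plan is to reduce the square variational inequality to the classical variational principle applied separately to the two ordinary two-sided shifts $\sigma^{L}$ on $\Sigma_{S}$ and $\sigma^{R}$ on $\Sigma_{Z}$, and then to recombine the two factor-wise bounds through the square root that appears in the definitions \eqref{Eq:t-meas} and \eqref{Eq:t-top}. In particular this only requires the \emph{easy} half of a variational principle, namely the inequality $\geq$, so no construction of an optimal measure is needed.

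First I would fix an arbitrary $\mu\in\mathcal{M}(\Sigma,\sigma_{\tau})$ and pass to its associated factor data. By Proposition \ref{prop:3.1} together with Remark \ref{rem:2}, such a $\mu$ is governed by a probability vector $P_S=(p_0,\dots,p_{l-1})$ on $S$, which induces $P_Z=(p'_{a_1},\dots,p'_{a_m})$ on $Z$ through the compatibility relation \eqref{Eq:inv-m}; these determine Bernoulli measures $\mu_S\in\mathcal{M}(\Sigma_S,\sigma^{L})$ and $\mu_Z\in\mathcal{M}(\Sigma_Z,\sigma^{R})$. By Lemma \ref{lem:zse} and the definition \eqref{Eq:t-meas},
\[
h_{S,\mu}(\sigma_{\tau})=\sqrt{\,h_{\mu_S}(\sigma^{L})\,h_{\mu_Z}(\sigma^{R})\,}.
\]

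Next I would invoke the classical variational principle, valid for each factor shift by Corollary \ref{cor:1} (a direct consequence of Theorem \ref{thm:vp}), to obtain the two factor-wise bounds
\[
h_{\mu_S}(\sigma^{L})\leq h_{top}(\sigma^{L}),\qquad h_{\mu_Z}(\sigma^{R})\leq h_{top}(\sigma^{R}).
\]
Since all four quantities are nonnegative, monotonicity of the product (namely $0\le a\le c$ and $0\le b\le d$ give $ab\le cd$) and of the square root yield
\[
\sqrt{\,h_{\mu_S}(\sigma^{L})\,h_{\mu_Z}(\sigma^{R})\,}\;\leq\;\sqrt{\,h_{top}(\sigma^{L})\,h_{top}(\sigma^{R})\,}\;=\;h_{S,top}(\sigma_{\tau}),
\]
the last equality being the definition \eqref{Eq:t-top}. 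As $\mu$ was arbitrary, taking the supremum over $\mu\in\mathcal{M}(\Sigma,\sigma_{\tau})$ gives the claim.

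The only genuine care-point, rather than a deep obstacle, is the very first step: one must verify that every $\sigma_{\tau}$-invariant $\mu$ projects to \emph{genuinely} $\sigma^{L}$- and $\sigma^{R}$-invariant factor measures, so that the classical principle is applicable on each factor; this is precisely the content of Proposition \ref{prop:3.1}. I would emphasize that the reverse inequality is \emph{not} claimed here: proving equality (and hence producing a $\sigma_{\tau}$-invariant measure attaining $h_{S,top}$) would require a single invariant measure whose two projections are simultaneously the uniform maximal-entropy measures $\mu_S^{*}$ and $\mu_Z^{*}$ of Remark \ref{rem:1}, a simultaneity constrained by \eqref{Eq:inv-m}. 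That compatibility is where the substantive content of a full variational principle would reside, and it is deliberately sidestepped by stating the result as an inequality.
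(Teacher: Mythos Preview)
Your proof is correct and follows essentially the same route as the paper: both arguments reduce the square inequality to the classical variational principle for the two factor shifts $\sigma^{L}$ and $\sigma^{R}$ (Corollary \ref{cor:1}) and then recombine via monotonicity of the product and the square root. The only cosmetic difference is that you fix an arbitrary $\mu$, bound each factor entropy from above, and then take the supremum, whereas the paper starts from $h_{S,top}(\sigma_{\tau})$, writes each $h_{top}$ as a supremum, and then restricts the $\mu_Z$-supremum to the compatible subset $\mathcal{M}_{*}(\Sigma_Z,\sigma^{R})$ determined by \eqref{Eq:inv-m}; your presentation is arguably the cleaner of the two. One minor remark: your appeal to Lemma \ref{lem:zse} is not quite apt since that lemma is stated only for the uniform measure, but this is harmless because you also cite the definition \eqref{Eq:t-meas}, which is all that is actually needed.
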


\begin{proof}
	Let $\sigma^{L}:\Sigma_{S}\to\Sigma_{S}$  and $\sigma^{R}:\Sigma_{Z}\to\Sigma_{Z}$ respectively represent the  two-sided left and two-sided right shift maps. Then by Corollary \ref{cor:1} the following variational principle is true.
	$$h_{top} (\sigma^j) = \sup\{h_{\mu_i}(\sigma^j) : \mu_i \in \mathcal{M}(\Sigma_i,\sigma^j)\}\,\text{for}\, (j=R,i=Z) \,\underline{or}\, (j=L,\,i=S).$$ 
	Let denote by $\mathcal{M}_*(\Sigma_Z,\sigma^R)\subseteq \mathcal{M}(\Sigma_Z,\sigma^R)$ the subset of $\sigma^R-$ invariant measures which satisfy \eqref{Eq:inv-m}.
	By \eqref{Eq:3} and  \eqref{Eq:t-top} we have,
	\begin{align*}
	h_{S,top}(\sigma_{\tau}) &= \sqrt{h_{top}(\sigma^{L}) h_{top}(\sigma^{R})}\\
	&=\sqrt{\sup_{\mu_S}\{h_{\mu_S}(\sigma^L) : \mu_S \in \mathcal{M}(\Sigma_S,\sigma^L)\}\,\sup_{\mu_Z}\{h_{\mu_Z}(\sigma^R) : \mu_Z \in \mathcal{M}(\Sigma_Z,\sigma^R)\}}\\
	&\geq\sqrt{\sup_{\mu_S}\{h_{\mu_S}(\sigma^L) : \mu_S \in \mathcal{M}(\Sigma_S,\sigma^L)\}\,\sup_{\mu_Z}\{h_{\mu_Z}(\sigma^R) : \mu_Z \in \mathcal{M}_*(\Sigma_Z,\sigma^R)\}}\\
	%&\leq\sqrt{h_{\mu_S^*}(\sigma^L)\,h_{\mu_Z^*}(\sigma^R)}\\
	&=\sup_{\mu}\{h_{S,\mu}(\sigma_{\tau}) : \mu\in \mathcal{M}(\Sigma, \sigma_{\tau})\},
	\end{align*}
	where,
	\begin{equation}\label{Eq:01}
	\mu(C_i^{s_i}):=\left\lbrace\begin{array}{ll}
	\mu_{S}(C_i^{s_i}) & \quad\quad\text{if } s_i\in S; \\
	\mu_{Z}(C_i^{s_i}) & \quad\quad\text{if } s_i\in Z.
	\end{array}\right. 
	\end{equation}
	
\end{proof}

\begin{theorem}\label{Thm:VP1}
	Let $Z$ (with cardinality $m$) and $S$ (with cardinality $l$) be two set of alphabets with $m\leq l$ and $\sigma_{\tau}:\Sigma\to \Sigma$ be an n-to-1 zip shift map corresponded to some  $\tau:S\to Z$. Then,
	$$h_{S,top}(\sigma_{\tau})= \sup\{h_{S,\mu}(\sigma_{\tau}) : \mu\in M(\Sigma, \sigma_{\tau})\}.$$	
\end{theorem}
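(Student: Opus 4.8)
The plan is to combine the inequality already furnished by Theorem \ref{Thm:VP} with a matching reverse inequality obtained by exhibiting a \emph{single} $\sigma_\tau$-invariant measure that simultaneously maximizes both factor entropies appearing in \eqref{Eq:t-meas}. Since Theorem \ref{Thm:VP} gives $h_{S,top}(\sigma_\tau)\geq\sup\{h_{S,\mu}(\sigma_\tau):\mu\in\mathcal{M}(\Sigma,\sigma_\tau)\}$, it suffices to produce a measure $\mu^*\in\mathcal{M}(\Sigma,\sigma_\tau)$ with $h_{S,\mu^*}(\sigma_\tau)=h_{S,top}(\sigma_\tau)$; the supremum is then attained and equality follows.

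First I would take $P_S=(1/l,\dots,1/l)$, the uniform distribution on $S$, and let $\mu_S^*$ be the associated Bernoulli measure on $\Sigma_S$. By Lemma \ref{Lem:m-e} this is the unique measure of maximal entropy for $\sigma^L$, so $h_{\mu_S^*}(\sigma^L)=\ln l=h_{top}(\sigma^L)$. The crux is the behaviour of the induced distribution $P_Z$ on $Z$. Since $\sigma_\tau$ is (uniformly) $n$-to-$1$, every symbol $a_i\in Z$ has exactly $n$ preimages under $\tau$, whence $l=nm$. Computing the induced weights from \eqref{Eq:inv-m} gives $p'_{a_i}=\sum_{s_j\in\tau^{-1}(a_i)}p_{s_j}=n\cdot\tfrac{1}{l}=\tfrac{1}{m}$ for every $i$; that is, the uniform distribution on $S$ pushes forward under $\tau$ to the uniform distribution on $Z$. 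Hence the induced Bernoulli measure $\mu_Z^*$ on $\Sigma_Z$ is itself the measure of maximal entropy for $\sigma^R$ (again by Lemma \ref{Lem:m-e}), giving $h_{\mu_Z^*}(\sigma^R)=\ln m=h_{top}(\sigma^R)$.

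Next I would assemble these into the zip-shift measure $\mu^*$ defined on basic cylinders as in \eqref{Eq:01}. Because $P_S$ and its induced $P_Z$ satisfy \eqref{Eq:inv-m} by construction, Proposition \ref{prop:3.1} guarantees that $\mu^*$ is $\sigma_\tau$-invariant, so $\mu^*\in\mathcal{M}(\Sigma,\sigma_\tau)$. Substituting into \eqref{Eq:t-meas} then yields
$$h_{S,\mu^*}(\sigma_\tau)=\sqrt{h_{\mu_S^*}(\sigma^L)\,h_{\mu_Z^*}(\sigma^R)}=\sqrt{\ln l\cdot\ln m}=\sqrt{h_{top}(\sigma^L)\,h_{top}(\sigma^R)}=h_{S,top}(\sigma_\tau),$$
which forces $\sup\{h_{S,\mu}(\sigma_\tau):\mu\in\mathcal{M}(\Sigma,\sigma_\tau)\}\geq h_{S,top}(\sigma_\tau)$. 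Combined with Theorem \ref{Thm:VP}, this establishes the desired equality.

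The step I expect to be the main obstacle --- and the precise place where the $n$-to-$1$ hypothesis is indispensable --- is the simultaneous maximization of both factors by one invariant measure. The constraint \eqref{Eq:inv-m} rigidly couples $P_Z$ to $P_S$, so a priori making $h_{\mu_S}(\sigma^L)$ maximal (via uniform $P_S$) need not render $h_{\mu_Z}(\sigma^R)$ maximal as well; in the general, non-uniform setting of Theorem \ref{Thm:VP} this coupling genuinely obstructs equality and only the inequality survives. It is exactly the equicardinality of the fibers $\tau^{-1}(a_i)$ in the $n$-to-$1$ case that makes \emph{uniform on $S$} and \emph{uniform on $Z$} compatible under \eqref{Eq:inv-m}, closing the gap. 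A secondary point worth verifying is that $h_{S,\mu^*}(\sigma_\tau)$ computed via \eqref{Eq:t-meas} agrees with the abstract square measure entropy $\sqrt{h_{\mu^*}^+(\sigma_\tau)\,h_{\mu^*}^-(\sigma_\tau)}$, which is furnished by Lemma \ref{lem:zse}.
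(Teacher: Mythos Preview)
Your proposal is correct and follows essentially the same approach as the paper: both identify the $\tfrac{1}{l}$-uniform Bernoulli measure $\mu^*$ as the measure realizing the supremum, using that for an $n$-to-$1$ zip shift the uniform $P_S$ pushes forward under $\tau$ to the uniform $P_Z$, so $\mu_S^*$ and $\mu_Z^*$ are compatible via \eqref{Eq:inv-m} and simultaneously maximize the two factor entropies. Your write-up is in fact more explicit than the paper's about where the $n$-to-$1$ hypothesis enters (the equicardinality of the fibers $\tau^{-1}(a_i)$) and about invoking Theorem~\ref{Thm:VP} for the $\geq$ direction, points which the paper's chain of equalities leaves implicit.
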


\begin{proof}
	Consider the uniform probability spaces $(\Sigma_S, \mathcal{B}_S, \mu^{*}_S)$ on $\Sigma_{S}$ and $(\Sigma_Z, \mathcal{B}_Z, \mu^{*}_Z)$ on $\Sigma_Z$.  Then by Corollary \ref{cor:1} and Lemma \ref{Lem:m-e}, for n-to-1 zip shift maps, 
	\begin{align*}
	h_{S,top}(\sigma_{\tau}) &= \sqrt{h_{top}(\sigma^{L}) h_{top}(\sigma^{R})}\\
	&=\sqrt{\sup_{\mu_S}\{h_{\mu_S}(\sigma^L) : \mu_S \in \mathcal{M}(\Sigma_S,\sigma^L)\}\,\sup_{\mu_Z}\{h_{\mu_Z}(\sigma^R) : \mu_Z \in \mathcal{M}(\Sigma_Z,\sigma^R)\}}\\
	%&=\sqrt{\sup\{h_{\mu}(\sigma^L) : \mu \in \mathcal{M}(\Sigma_S,\sigma^L)\}\,\sup\{h_{\mu}(\sigma^R) : \mu \in \mathcal{M}_*(\Sigma_Z,\sigma^R)\}}\\
	&=\sqrt{h_{\mu_S^*}(\sigma^L)\,h_{\mu_Z^*}(\sigma^R)}\\
	&=\sup_{\mu}\{h_{S,\mu}(\sigma_{\tau}) : \mu\in \mathcal{M}(\Sigma, \sigma_{\tau})\},
	\end{align*}
	where  $\mu$ is as \eqref{Eq:01}.
\end{proof}

The following Theorem from \cite{9}, shows that zip shift maps and in special the n-to-1 zip shift maps are strongly mixing and ergodic.

\begin{theorem}\label{Thm:3}
	The zip shift map $\sigma_{\tau}:\Sigma\to \Sigma$ is strongly mixing and ergodic.
\end{theorem}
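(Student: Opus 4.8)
The statement asserts that $\sigma_\tau$ is strongly mixing and ergodic. Since strong mixing implies ergodicity, the plan is to prove strong mixing directly on the generating family of cylinder sets and then invoke the standard approximation argument to extend to the full $\sigma$-algebra.

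First I would recall that strong mixing means $\lim_{n\to\infty}\mu(\sigma_\tau^{-n}(A)\cap B)=\mu(A)\mu(B)$ for all measurable $A,B$. By the standard density argument, it suffices to verify this on a family generating $\mathcal{C}$ that is closed under finite intersections, namely the general cylinder sets $C_{i_1,\dots,i_k}^{s_1,\dots,s_k}$ defined in the excerpt. So I would fix two such cylinders $A=C_{i_1,\dots,i_k}^{s_1,\dots,s_k}$ and $B=C_{j_1,\dots,j_r}^{t_1,\dots,t_r}$, each constraining finitely many coordinates, and compute $\mu(\sigma_\tau^{-n}(A)\cap B)$ for large $n$.

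The key computation exploits the coordinate-shifting behavior of $\sigma_\tau$ established in \eqref{ZS} and the preimage formula from Proposition \ref{prop:3.1}. Applying $\sigma_\tau^{-n}$ shifts the constraints of $A$ to higher (more positive) indices; since $B$ constrains only coordinates up to some fixed maximal index, for $n$ large enough the constrained coordinate sets of $\sigma_\tau^{-n}(A)$ and $B$ become disjoint. Because the measure $\mu$ is defined multiplicatively on independent cylinders via \eqref{Eq:1}, the measure of the intersection factorizes as $\mu(\sigma_\tau^{-n}(A))\,\mu(B)$. Finally, since $\sigma_\tau$ is measure preserving (Proposition \ref{prop:3.1}), $\mu(\sigma_\tau^{-n}(A))=\mu(A)$, giving $\mu(\sigma_\tau^{-n}(A)\cap B)=\mu(A)\mu(B)$ for all sufficiently large $n$, which yields the limit.

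The main obstacle is the bookkeeping at the junction $i=-1$, where the dynamics applies $\tau$ rather than a plain shift. When a preimage under $\sigma_\tau^{-n}$ pushes constraints across the boundary between the $Z$-valued negative coordinates and the $S$-valued nonnegative coordinates, one must track how a constraint $x_{-1}=s\in Z$ unfolds into the union $\bigcup_{s_j\in\tau^{-1}(s)}C_0^{s_j}$ from \eqref{Eq:2.1}, and verify that the measure is preserved through relation \eqref{Eq:2}. This is precisely where the compatibility condition \eqref{Eq:inv-m} is essential: it guarantees that the $\tau$-induced splitting preserves the product structure of $\mu$ so that factorization still holds. Once this boundary case is handled, the disjoint-support factorization is routine, and ergodicity follows immediately as a consequence of strong mixing.
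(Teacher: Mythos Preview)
Your plan is correct and is the standard argument for Bernoulli-type product measures: verify strong mixing on the $\pi$-system of cylinders by shifting constraints via $\sigma_\tau^{-n}$ until their supports are disjoint from those of $B$, use the product form \eqref{Eq:1} to factorize, invoke measure preservation from Proposition~\ref{prop:3.1}, and then extend to $\mathcal{C}$ by approximation. Your identification of the only nontrivial point, namely the crossing of the boundary at index $-1$ where a $Z$-constraint unfolds into a finite union over $\tau^{-1}$ via \eqref{Eq:2.1} and \eqref{Eq:2}, is exactly the right thing to track, and \eqref{Eq:inv-m} is precisely what makes the measure survive this unfolding.

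Note, however, that the paper does \emph{not} give its own proof of this theorem: the sentence immediately preceding the statement says ``The following Theorem from \cite{9}\ldots'', so the result is simply imported from the reference (Mehdipour--Martins, \emph{Arch.\ Math.}\ 2022). There is therefore no in-paper proof to compare against; what you have written is an independent (and correct) proof sketch that stands in for the cited argument.
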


\begin{proposition}
	Let $\mathcal{P}$ be a finite partition for a zip shift map $\sigma_{\tau}:\Sigma\to \Sigma$ over a Lebesgue probability space $(\Sigma, \mathcal{B}, \mu)$ such that $\mu(\partial(\mathcal{P}))=0$. Then,
	the function $\nu\rightarrow h_{S,\nu}(\sigma_{\tau},P)$ is upper semi-continuous at $\mu$.
\end{proposition}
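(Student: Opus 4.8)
The plan is to reduce the statement to the classical upper semicontinuity of Kolmogorov–Sinai entropy as a function of the measure for a \emph{fixed} finite partition, applied separately to the two factors building the square entropy, and then to observe that the geometric mean preserves upper semicontinuity. Throughout I take $\nu$ to range over $\mathcal{M}(\Sigma,\sigma_\tau)$, so that $\sigma_\tau$ is $\nu$-measure preserving and the defining limits exist, and I write $h_{S,\nu}(\sigma_\tau,\mathcal{P})=\sqrt{h^{+}_{\nu}(\sigma_\tau,\mathcal{P})\,h^{-}_{\nu}(\sigma_\tau,\mathcal{P})}$ with $h^{-}_{\nu}(\sigma_\tau,\mathcal{P})=\lim_n \tfrac1n H_\nu(\bigvee_{i=0}^{n-1}\sigma_\tau^{-i}\mathcal{P})$ and $h^{+}_{\nu}(\sigma_\tau,\mathcal{P})=\lim_n \tfrac1n H_\nu(\bigvee_{i=0}^{n-1}\sigma_\tau^{i}\mathcal{P})$. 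By Lemma~\ref{lem:zse} both factors are finite and bounded (by $\ln l$ and $\ln m$ respectively).

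First I would treat the backward factor, which is the textbook situation. By the subadditivity inequality \eqref{Eq:2-1} together with Lemma~\ref{3.1}, one has $h^{-}_{\nu}(\sigma_\tau,\mathcal{P})=\inf_{n}\tfrac1n H_\nu(\mathcal{P}^{-,n})$ where $\mathcal{P}^{-,n}=\bigvee_{i=0}^{n-1}\sigma_\tau^{-i}\mathcal{P}$. For each fixed $n$ the cells of $\mathcal{P}^{-,n}$ have boundary contained in $\bigcup_{i=0}^{n-1}\sigma_\tau^{-i}(\partial\mathcal{P})$; since $\sigma_\tau$ is continuous and $\nu$-invariant and $\mu(\partial\mathcal{P})=0$, this forces $\mu(\partial\mathcal{P}^{-,n})=0$. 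Hence if $\nu_k\to\mu$ weak$^\ast$, the portmanteau theorem gives $\nu_k(A)\to\mu(A)$ for every cell $A$ of $\mathcal{P}^{-,n}$, and continuity of $t\mapsto -t\ln t$ makes $\nu\mapsto \tfrac1n H_\nu(\mathcal{P}^{-,n})$ continuous at $\mu$. An infimum of functions continuous at $\mu$ is upper semicontinuous at $\mu$, so $\nu\mapsto h^{-}_{\nu}(\sigma_\tau,\mathcal{P})$ is upper semicontinuous at $\mu$.

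Next comes the forward factor, which is where the real work lies, since $\sigma_\tau$ fails to be injective. Two points must be secured. (a) \emph{Subadditivity:} I need $n\mapsto H_\nu(\bigvee_{i=0}^{n-1}\sigma_\tau^i\mathcal{P})$ to be subadditive so that the limit is an infimum as in Lemma~\ref{3.1}; because $\nu$ is $\sigma_\tau$-invariant this follows once one knows $H_\nu(\sigma_\tau^{\,m}\gamma)=H_\nu(\gamma)$ for the relevant refinements, which is exactly the forward-invariance encoded in the good $\sigma$-algebra on which $h^{+}$ is defined. (b) \emph{Null boundaries:} I must check $\mu(\partial(\bigvee_{i=0}^{n-1}\sigma_\tau^i\mathcal{P}))=0$; since $\sigma_\tau$ is a local homeomorphism, each branch is bi-continuous and, for the product measure built in \eqref{Eq:1}, nonsingular, so it carries $\mu$-null sets to $\mu$-null sets, giving $\mu(\sigma_\tau^{i}(\partial\mathcal{P}))=0$ and hence $\mu(\partial(\bigvee_{i}\sigma_\tau^i\mathcal{P}))=0$. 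With (a) and (b) in hand, the same portmanteau-plus-continuity argument shows each $\nu\mapsto\tfrac1n H_\nu(\bigvee_{i=0}^{n-1}\sigma_\tau^i\mathcal{P})$ is continuous at $\mu$, and the infimum is upper semicontinuous at $\mu$. As a consistency check, passing to a generating sequence recovers $h^{+}_\mu(\sigma_\tau)=h_{\mu_S}(\sigma^{L})$ of Lemma~\ref{lem:zse}, the entropy of an honest homeomorphism, where upper semicontinuity is classical; this is the heuristic that guarantees (a) and (b) cannot go wrong.

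Finally I would assemble the factors. Both $h^{+}_{\nu}(\sigma_\tau,\mathcal{P})$ and $h^{-}_{\nu}(\sigma_\tau,\mathcal{P})$ are nonnegative, bounded, and upper semicontinuous at $\mu$; for nonnegative bounded functions upper semicontinuity is preserved under products, since $\limsup a_k b_k\le \limsup a_k\,\limsup b_k$, and under post-composition with the continuous increasing map $t\mapsto\sqrt{t}$. Therefore $\nu\mapsto h_{S,\nu}(\sigma_\tau,\mathcal{P})$ is upper semicontinuous at $\mu$. The one genuinely delicate step, as flagged, is establishing (a) and (b) for the non-invertible forward dynamics; everything else is the standard fixed-partition semicontinuity argument.
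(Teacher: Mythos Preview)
Your argument is correct, and it follows the same overall architecture as the paper: split $h_{S,\nu}$ into its forward and backward factors, argue upper semicontinuity of each factor separately, and then combine via the inequality $\limsup a_k b_k\le(\limsup a_k)(\limsup b_k)$ for nonnegative bounded sequences, finishing with the monotone continuity of $\sqrt{\,\cdot\,}$.

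Where you differ is in how you establish upper semicontinuity of the two factors. The paper does not run the portmanteau/infimum argument at the level of the partition $\mathcal{P}$ at all; instead it invokes the redefinition \eqref{Eq:t-meas} (equivalently Lemma~\ref{lem:zse}) to rewrite $h_{S,\nu}(\sigma_\tau)$ as $\sqrt{h_{\nu_Z}(\sigma^R)\,h_{\nu_S}(\sigma^L)}$, where $\sigma^L$ and $\sigma^R$ are honest bilateral shift \emph{homeomorphisms}, and then simply cites the classical upper semicontinuity of Kolmogorov--Sinai entropy for such systems. In particular the hypothesis $\mu(\partial\mathcal{P})=0$ plays no visible role in the paper's proof, whereas in yours it is genuinely used (together with forward nonsingularity of the local homeomorphism) to push null boundaries through both $\sigma_\tau^{-i}$ and $\sigma_\tau^{i}$. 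Your route is more self-contained and makes transparent exactly where the GIP/good-$\sigma$-algebra structure enters (your point~(a)); the paper's route is shorter but leans on the reduction to the factor shifts. What you flag as the ``one genuinely delicate step'' is precisely what the paper sidesteps by passing to $\sigma^L,\sigma^R$.
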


\begin{proof}
	Let $\mathcal{M}(\Sigma,\sigma_{\tau})$ denote the space of all invariant probability measures of $\sigma_{\tau}$. Considering the weak* topology, it can be shown that the $\mathcal{M}(\Sigma,\sigma_{\tau})$ is compact. To state the upper semi-continuity, we show that $\limsup_{\nu_n\to \mu} h_{S,\nu_n}(\sigma_{\tau})\leq h_{S,\nu}(\sigma_{\tau})$. As it is known \cite{012} the Kolmogrov-Sinai entropy of continuous maps and in special the bilateral Bernoulli shift is upper-semi continuous. Let $\nu_n\to \mu$ (see Remark \eqref{rem:2}), then
	\begin{align*}
	\limsup_{\nu_n\to \mu} h_{S,\nu_n}(\sigma_{\tau})&=\limsup_{{\nu_n}\to \mu} \sqrt{h_{{(\nu_n)}_{Z}}(\sigma^{R})h_{{(\nu_n)}_{S}}(\sigma^{L})}\\
	&=\sqrt{\limsup_{{(\nu_n)}_Z\to \mu_Z}h_{{(\nu_n)}_{Z}}(\sigma^{R})\limsup_{{(\nu_n)}_S\to \mu_S}h_{{(\nu_n)}_{S}}(\sigma^{L})}\\
	&\leq \sqrt{h_{\mu_Z}(\sigma^{R})h_{\mu_S}(\sigma^{L})}=h_{S,\mu}(\sigma_{\tau}).
	\end{align*}
\end{proof}

\begin{theorem}
	The n-to-1 zip shift maps have intrinsic ergodicity with respect to S-entropy.	
\end{theorem}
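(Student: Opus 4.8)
The statement asserts that $\sigma_\tau$ has a \emph{unique} invariant measure maximizing the square measure entropy $h_{S,\mu}(\sigma_\tau)$, whose value equals $h_{S,top}(\sigma_\tau)$; this is the definition of intrinsic ergodicity applied to the S-entropy. The plan is to bootstrap from the variational equality of Theorem \ref{Thm:VP1} and to transfer the uniqueness of the maximizing measure from the two marginal shifts $\sigma^L,\sigma^R$ --- where it is supplied by Lemma \ref{Lem:m-e} --- to the zip shift itself. First I would recall from Theorem \ref{Thm:VP1} that
\[
h_{S,top}(\sigma_\tau)=\sup_{\mu\in\mathcal{M}(\Sigma,\sigma_\tau)}h_{S,\mu}(\sigma_\tau)=\sqrt{h_{\mu_S^*}(\sigma^L)\,h_{\mu_Z^*}(\sigma^R)}=\sqrt{\ln l\,\ln m},
\]
so a measure $\mu$ is an S-measure of maximal entropy exactly when its marginals $\mu_S,\mu_Z$ satisfy $\sqrt{h_{\mu_S}(\sigma^L)\,h_{\mu_Z}(\sigma^R)}=\sqrt{\ln l\,\ln m}$, using \eqref{Eq:t-meas}.

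The decisive step is an elementary extremal remark. By Lemma \ref{Lem:m-e} (recorded in \eqref{Eq:3}) one has $0\le h_{\mu_S}(\sigma^L)\le\ln l$ and $0\le h_{\mu_Z}(\sigma^R)\le\ln m$ with $\ln l,\ln m>0$; since $\sqrt{ab}$ on $[0,\ln l]\times[0,\ln m]$ attains its maximum only at the corner $(\ln l,\ln m)$, the equality above forces \emph{both} factors to be maximal at once, i.e.\ $h_{\mu_S}(\sigma^L)=\ln l$ and $h_{\mu_Z}(\sigma^R)=\ln m$. The product structure of the S-entropy is essential here: a deficit in one marginal entropy cannot be offset by a surplus in the other, so there is no trade-off to exploit. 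Invoking the uniqueness clause of Lemma \ref{Lem:m-e}, the first equality holds only for the $\tfrac1l$-uniform Bernoulli marginal $\mu_S^*$ and the second only for the $\tfrac1m$-uniform marginal $\mu_Z^*$. As an invariant measure on $\Sigma$ is determined on basic cylinders by its marginals through \eqref{Eq:01}, the maximizer is pinned to the single measure $\mu^*$ with marginals $(\mu_S^*,\mu_Z^*)$.

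The hard part, and the only place the uniform $n$-to-1 hypothesis is used, is to confirm that this candidate $\mu^*$ is genuinely admissible, i.e.\ $\sigma_\tau$-invariant. The two marginals are not free parameters: fixing $\mu_S$ forces $\mu_Z$ through the compatibility relation \eqref{Eq:inv-m} of Proposition \ref{prop:3.1}, so one must check that the two independently-optimal marginals are mutually compatible. For a uniform $n$-to-1 map ($l=nm$, with each $a_i\in Z$ having exactly $n$ preimages under $\tau$) this is automatic, since $p_{s_j}=\tfrac1l$ induces $p'_{a_i}=\sum_{s_j\in\tau^{-1}(a_i)}\tfrac1l=\tfrac{n}{l}=\tfrac1m$, so $(\mu_S^*,\mu_Z^*)$ satisfies \eqref{Eq:inv-m} and hence $\mu^*\in\mathcal{M}(\Sigma,\sigma_\tau)$ by Proposition \ref{prop:3.1}. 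Granting this, $\mu^*$ is the unique S-measure of maximal entropy, so $\sigma_\tau$ is intrinsically ergodic with respect to the S-entropy. (The nondegeneracy $m\ge2$ is needed so that $\ln m>0$ and the corner maximum is truly unique; for $m=1$ the S-entropy vanishes identically and the notion is void.)
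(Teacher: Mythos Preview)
Your proof is correct and follows essentially the same route as the paper: invoke Theorem \ref{Thm:VP1} to identify the S-topological entropy with $\sqrt{h_{\mu_S^*}(\sigma^L)\,h_{\mu_Z^*}(\sigma^R)}$, then apply the uniqueness of the maximal-entropy measure for each marginal shift from Lemma \ref{Lem:m-e}. Your version is in fact more careful than the paper's, since you make explicit the corner-maximum argument forcing both marginal entropies to be simultaneously maximal, and you verify (via \eqref{Eq:inv-m}) that the independently optimal marginals $(\mu_S^*,\mu_Z^*)$ are mutually compatible under the uniform $n$-to-1 hypothesis---a point the paper leaves implicit.
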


\begin{proof}
	By the proof of Theorem \ref{Thm:VP1}, we have 
	$$h_{S,top}(\sigma_{\tau})= \sqrt{h_{\mu_S^*}(\sigma^L)\,h_{\mu_Z^*}(\sigma^R)}=h_{S,\mu^{*}	}(\sigma_{\tau}) ,$$
	which	$\mu_{i}^{*}$ for $i=Z,S$ are the measures of maximal entropy for corresponded $\sigma^j,\,j=R,L$. Now by Lemma \ref{Lem:m-e}, any n-to-1 zip shift map has intrinsic ergodicity.
\end{proof}

%%%%%%%%%%%%%%%%%%%%%%%%%%%%%%%%%%%%%%%%%%%%%%%%%%%%%%%%%%%%
%%%%%%%%%%%%%%%%%%%%%%%%%%%%%%%%%%%%%%%%%%%%%%%%%%%%%%%%%%%%
%%%%%%%%%%%%%%%%%%%%%%%%%%%%%%%%%%%%%%%%%%%%%%%%%%%%%%%%%%%%

\section{Classification of uniform n-to-1 $(m,l)$-Bernoulli transformations}\label{sec:5}
The Bernoulli transformations and non-invertible dynamics which are one-sided Bernoulli transformation are studies extensively \cite{10},\cite{1},\cite{013}. Howbeit, as mentioned in Section \ref{sec:03}, Example \ref{ex:3.1} and the class of all such finite-to-1 maps are not one-sided Bernoulli transformations. The interested reader can find more examples of transformations which are not one-sided Bernoulli in \cite{013}.
%Note that the cylinder sets are defined independently with respect to each other. Indeed for a general cylinder set,
%\begin{eqnarray*}
%		\mu(C_{i}^{s_i,\cdots,s_{i+k}})
%		&=& \mu(C_{i}^{s_i}\cap C_{i+1}^{s_{i+1}}\cdots \cap C_{i+k}^{s_{i+k}})\\
%	&=&\mu(C_{i}^{s_i})\cdots \mu(C_{i+k}^{s_{i+k}}).
%	\end{eqnarray*} 
In order to improve the classification of finite-to-1 transformations, which are not necessarily one-sided Bernoulli, we use the following property. It extends the definition of the two-sided Bernoulli property.
\begin{definition}[\textbf{Extended Bernoulli maps}]
	The measure preserving transformation $f:X\rightarrow X$ defined on a Lebesgue space $(X,\mathcal{B}, \mu)$ is of $(m,l)$-Bernoulli type if it is conjugated mod-0 with a full zip shift map. Here $m=\#(Z)$ and $l=\#(S)$ represent the cardinalities of the symbolic sets $Z$ and $S$ of the associated zip shift map.
\end{definition}
In what follows by an n-to-1 Bernoulli map we mean an n-to-1 measure preserving transformation  of $(m,l)$-Bernoulli type.

Next, we show that for n-to-1 transformations  of $(m,l)$-Bernoulli type, the good image partition or GIP exists. 
\begin{proposition}\label{prop:4}
	Let $(X,f,\mu)$ represent  a transformation  of $(m,l)$-Bernoulli type. Then there exists some partition $Q$ for $X$ such that $Q$ is a good image partition.
\end{proposition}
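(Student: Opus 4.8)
The plan is to move the problem onto the model zip shift via the defining conjugacy and then write down an explicit good image partition there. By hypothesis $(X,f,\mu)$ is of $(m,l)$-Bernoulli type, so there is an invertible measure-preserving map $\varphi$ in the sense of Definition~\ref{I-T} conjugating $f$ mod-$0$ to a full zip shift $\sigma_\tau\colon\Sigma\to\Sigma$ on the symbol sets $S$ and $Z$, with $\varphi\circ f=\sigma_\tau\circ\varphi$. Since being forward $\mu$-invariant is preserved under an invertible measure isomorphism that intertwines the two maps, it suffices to produce a GIP $\mathcal{Q}$ for $\sigma_\tau$ on $\Sigma$; its pullback $Q:=\{\varphi^{-1}(Q_i):Q_i\in\mathcal{Q}\}$ is then a GIP for $f$, which is what we must exhibit.

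First I would take as candidate the partition of $\Sigma$ by the symbol occupying coordinate $-1$, namely $\mathcal{Q}=\{C_{-1}^{a}:a\in Z\}$ in the notation of \eqref{B-C}. This records exactly the $Z$-side (image) coordinate, and it is the natural partition for the forward entropy $h^{+}_{\mu}$: its forward refinements $\bigvee_{i=0}^{n-1}\sigma_\tau^{i}(\mathcal{Q})$ are the partitions $\mathcal{C}_{-n}$ that generate the good Borel $\sigma$-algebra, as recorded just before Lemma~\ref{lem:zse}.

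Next I would verify the defining requirement of a GIP, that each element is forward $\mu$-invariant. From the formula \eqref{ZS} and the surjectivity of $\tau$ a short coordinate computation gives $\sigma_\tau(C_{-1}^{a})=C_{-2}^{a}$: if $x_{-1}=a$ then $(\sigma_\tau x)_{-2}=x_{-1}=a$, while conversely any $y$ with $y_{-2}=a$ is attained since the inserted symbol $(\sigma_\tau x)_{-1}=\tau(x_{0})$ can be realized by a suitable choice of $x_{0}$. Consequently the forward images $\sigma_\tau^{i}(\mathcal{Q})=\{C_{-1-i}^{a}:a\in Z\}$ remain genuine partitions (the sets stay pairwise disjoint in $a$), and by the measure assignment \eqref{Eq:1} together with the balance relation \eqref{Eq:inv-m} of Proposition~\ref{prop:3.1} one has $\mu(\sigma_\tau(C_{-1}^{a}))=\mu(C_{-2}^{a})=p'_{a}=\mu(C_{-1}^{a})$. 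Thus every element is carried onto an element of equal measure, so $\mathcal{Q}$ is forward $\mu$-invariant and hence a GIP, and pulling back by $\varphi$ produces the desired $Q$ on $X$.

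The point demanding care, and what I regard as the main obstacle, is that $\sigma_\tau$ is globally $n$-to-$1$: for a non-invertible map the forward image of a partition may overlap, so not every finite partition can serve, and indeed the $S$-side partition $\{C_0^{s}:s\in S\}$ fails, since $\sigma_\tau(C_0^{s})=C_{-1}^{\tau(s)}$ collapses fibers of $\tau$. The resolution I would stress is that on the sub-$\sigma$-algebra generated by the negative ($Z$-valued) coordinates the zip shift acts as the \emph{invertible} two-sided right shift $\sigma^{R}$ of Subsection~\ref{subsec:3.1}; it is precisely this restricted invertibility that keeps the forward images of $\mathcal{Q}$ disjoint and makes its elements forward measure invariant. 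Once this good $\sigma$-algebra is identified, the verification above is routine and closes the argument.
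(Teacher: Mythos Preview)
Your proposal is correct and follows essentially the same route as the paper: both transfer the problem to the model zip shift via the defining conjugacy, take the partition $\mathcal{Q}=\{C_{-1}^{a}:a\in Z\}$ by the $(-1)$-coordinate, verify $\sigma_\tau(C_{-1}^{a})=C_{-2}^{a}$ and hence $\mu(\sigma_\tau(C_{-1}^{a}))=p'_{a}=\mu(C_{-1}^{a})$ using Proposition~\ref{prop:3.1}, and then pull back by the isomorphism. Your write-up is in fact more careful than the paper's, since you explicitly check that the forward images remain a partition and explain why the $S$-side partition would fail.
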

\begin{proof}
	Let $(X,f,\nu)$ be a transformation  of $(m,l)$-Bernoulli type, which is isomorphic (mod-0) with some zip shift measure dynamics $(\sigma_{\tau},\mu)$ on $m$ ($\#(Z)$) and $l$ ($\#(S)$) alphabets. Denote this isomorphism by $\phi:X\to \Sigma$. Recall that by Definition \ref{I-T}, the map $\phi$ is invertible and bi-measure preserving.  In order to obtain a good image partition on $X$, we recall the basic cylinder sets on the (full) zip shift space $(\Sigma,\sigma_{\tau})$. Let $Z=\{a_1,\dots,a_k\}$ and  $S=\{0,1,\dots,l-1\}$
	%	such that $\frac{\#(S)}{\#(Z)}=n$ ($\#$ stands for cardinality). 
	and $\mu$ be the invariant measure induced from a probability distribution $P_S=(p_{0},\dots,p_{l-1})$. Then the measurable partition $\mathcal{C}_Z=\{C_{-1}^{a_1},\dots,C_{-1}^{a_k}\}$ is a good image partition.
	Note that conjugacy preserves the degree of maps (i.e. the cardinality of the pre-image of a point). By Proposition \ref{prop:3.1} the basic cylinder set $C_{-1}^{a_i},a_i\in Z$ has measure $\mu(C_{-1}^{a_i})=\sum_{j\in \tau^{-1}(a_i)}p_j$, where $j\in S$. Indeed for any $a_i\in Z$,
	$$\mu(\sigma_{\tau}(C_{-1}^{a_i}))=\mu(C^{a_i}_{-2})=\sum_{j\in \tau^{-1}(a_i)}p_j.$$ 
	Which means that elements of $\mathcal{C}_Z=\{C_{-1}^{a_i}:a_i\in Z\}$ are forward invariant and indeed $\mathcal{C}_Z$ is a GIP. Now one uses $\phi$ to pull back the measurable partition $\mathcal{C}_Z$ into a measurable partition $Q$ on $X$. As $\phi$ is a bi-measure preserving map, the elements $Q_i=\phi^{-1}(C_{-1}^{a_i})$ of $Q$ becomes forward $\nu-$measure invariant and $Q$ is a GIP.
\end{proof}

\begin{theorem}
	Any Bernoulli transformation is  of $(m,l)$-Bernoulli type.
\end{theorem}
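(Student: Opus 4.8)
The plan is to recognize that a Bernoulli transformation is already conjugated to a full zip shift map, namely the degenerate zip shift in which the two symbol sets coincide and $\tau$ is the identity.

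First I would unwind the hypothesis. By the definition of the Bernoulli property, $(f,\mu)$ is conjugated mod-0 to a two-sided shift $(\sigma,\mu')$ with $\sigma:\Sigma_S\to\Sigma_S$, $S=\{0,1,\dots,l-1\}$, where $\mu'$ is the Bernoulli product measure induced by a probability vector $P=(p_0,\dots,p_{l-1})$. Denote the conjugating map by $\varphi$; by Definition \ref{I-T} it is invertible and bi-measure preserving on full-measure invariant sets.

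Second I would invoke the observation, already recorded in the excerpt, that the ordinary two-sided shift is a full zip shift map: taking $Z=S$ and $\tau:S\to Z$ the identity, the relation \eqref{Z} forces $x_i=t_i$ for every $i\in\mathbb{Z}$, so $\Sigma=\Sigma_S$, and the zip shift rule \eqref{ZS} collapses to $(\sigma_\tau(\bar x))_i=x_{i+1}$, i.e. $\sigma_\tau=\sigma$. Thus the two-sided shift is the full zip shift on $(m,l)=(l,l)$ symbols.

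Third I would check that the measures coincide. With $\tau=\mathrm{id}$ one has $\tau^{-1}(a_i)=\{a_i\}$, so the induced vector satisfies $p'_{a_i}=\sum_{s_j\in\tau^{-1}(a_i)}p_{s_j}=p_{a_i}$, and by Proposition \ref{prop:3.1} (condition \eqref{Eq:inv-m}) the zip shift invariant measure defined through \eqref{Eq:1} is exactly $\mu'$. Hence the map $\varphi$ witnessing the Bernoulli property of $f$ is simultaneously a conjugacy mod-0 between $(f,\mu)$ and the full zip shift $(\sigma_\tau,\mu')$ with $m=\#(Z)=l$ and $l=\#(S)$.

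Consequently $f$ is conjugated mod-0 to a full zip shift map and is therefore of $(l,l)$-Bernoulli type, in particular of $(m,l)$-Bernoulli type. There is no genuine obstacle here: the only point requiring care is the bookkeeping that places the classical two-sided Bernoulli shift inside the zip shift framework as the case $m=l$ with $\tau$ the identity, after which the measure-theoretic data match verbatim.
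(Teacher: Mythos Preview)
Your proposal is correct and follows essentially the same approach as the paper: both arguments observe that the two-sided shift on $l$ symbols is precisely the full zip shift with $S=Z$ and $\tau=\mathrm{id}$, so that any Bernoulli transformation is automatically of $(l,l)$-Bernoulli type. Your version simply spells out more carefully the verification of \eqref{ZS} and the compatibility of the measures via \eqref{Eq:inv-m}, which the paper leaves implicit.
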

\begin{proof}
	It is not difficult to verify that by Definition \eqref{ZS} any shift homeomorphism defined on a shift space with $l$ alphabets is a zip shift map with $S=Z$ and with $\tau(x)=id(x)$. Indeed any Bernoulli transformation is conjugated with a 1-to-1 zip shift map and is  of $(m,l)$-Bernoulli type with $m=l$.
\end{proof}
Observe that the inverse of above theorem is not valid in general.

\begin{proposition}\label{prop:2}
	The square entropy is preserved under measure theoretical conjugacy in the class of transformations of $(m,l)$-Bernoulli type.
\end{proposition}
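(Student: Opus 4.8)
The plan is to show that the two factors $h_\mu^+(f)$ and $h_\mu^-(f)$ appearing in $h_{S,\mu}(f)=\sqrt{h_\mu^+(f)\,h_\mu^-(f)}$ are \emph{each} invariant under measure-theoretic conjugacy; since the square entropy is a fixed function of these two quantities, its invariance then follows at once. Fix two transformations $(X_1,\mathcal{B}_1,\mu_1,f_1)$ and $(X_2,\mathcal{B}_2,\mu_2,f_2)$ of $(m,l)$-Bernoulli type that are isomorphic mod-$0$, and let $\varphi:M_1\to M_2$ be the invertible, bi-measure-preserving map from Definition \ref{I-T} satisfying $\varphi\circ f_1=f_2\circ\varphi$ on the full-measure invariant sets $M_1,M_2$.

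For the backward factor, I would observe that $h_\mu^-(f)$ is by construction the classical Kolmogorov--Sinai entropy $h_\mu(f)=\sup_\eta h_\mu(f,\eta)$ of \eqref{B-ent}, whose invariance under isomorphism mod-$0$ is standard: $\varphi$ sets up a measure- and dynamics-preserving bijection $\eta\mapsto\varphi(\eta)$ on finite measurable partitions, under which $H_{\mu_1}(\bigvee_{i=0}^{n-1}f_1^{-i}\eta)=H_{\mu_2}(\bigvee_{i=0}^{n-1}f_2^{-i}\varphi(\eta))$ for all $n$, so the suprema coincide. Hence $h_{\mu_1}^-(f_1)=h_{\mu_2}^-(f_2)$.

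The substantive step is the forward factor $h_\mu^+(f)=\sup_{\mathcal{Q}}h_\mu^+(f,\mathcal{Q})$ of \eqref{eq:h-}, where the supremum runs over \emph{good image partitions}. First I would check that $\varphi$ restricts to a bijection between GIPs of $f_1$ and GIPs of $f_2$: if $Q$ is a forward $\mu_1$-invariant atom, i.e. $\mu_1(f_1(Q))=\mu_1(Q)$, then since $\varphi$ is measure preserving and $f_2(\varphi(Q))=\varphi(f_1(Q))$ one gets $\mu_2(f_2(\varphi(Q)))=\mu_1(f_1(Q))=\mu_1(Q)=\mu_2(\varphi(Q))$, so $\varphi(Q)$ is forward $\mu_2$-invariant; the inverse correspondence under $\varphi^{-1}$ is symmetric. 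Next, from $\varphi\circ f_1^{\,i}=f_2^{\,i}\circ\varphi$ one verifies atomwise the identity $\varphi(f_1^{\,i}(\mathcal{Q}))=f_2^{\,i}(\varphi(\mathcal{Q}))$, and since $\varphi$ preserves finite intersections it commutes with the join, giving $\varphi(\mathcal{Q}^{+n})=(\varphi(\mathcal{Q}))^{+n}$. Because $\varphi$ is measure preserving, $H_{\mu_1}(\mathcal{Q}^{+n})=H_{\mu_2}((\varphi(\mathcal{Q}))^{+n})$ for every $n$, and passing to the limit yields $h_{\mu_1}^+(f_1,\mathcal{Q})=h_{\mu_2}^+(f_2,\varphi(\mathcal{Q}))$. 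Taking the supremum over the bijectively corresponding GIPs gives $h_{\mu_1}^+(f_1)=h_{\mu_2}^+(f_2)$, and combining the two factors proves $h_{S,\mu_1}(f_1)=h_{S,\mu_2}(f_2)$.

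The main obstacle I anticipate is the forward factor, specifically the need to work with \emph{forward images} $f^{\,i}(\mathcal{Q})$ of partitions under the non-invertible $f$: one must confirm that the join $\mathcal{Q}^{+n}$ is genuinely a partition (this is precisely where the good-image hypothesis is used, guaranteeing that the images $f^{\,i}(Q_j)$ fit together without measurable overlap) and that all the set identities above hold only mod-$0$ on $M_1,M_2$. Once the GIP-to-GIP correspondence and the relation $\varphi(\mathcal{Q}^{+n})=(\varphi(\mathcal{Q}))^{+n}$ are secured modulo null sets, the remainder is the routine transport of the classical invariance argument through $\varphi$.
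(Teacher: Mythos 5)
Your proposal is correct and follows exactly the route of the paper, whose entire proof is the one-sentence assertion that the $(KS)^+$ and $(KS)^-$ entropies are each preserved under conjugacy mod-$0$; you simply supply the details (the GIP-to-GIP correspondence via $\varphi$ and the transport of the joins $\mathcal{Q}^{+n}$) that the paper leaves implicit.
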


\begin{proof}
	The proof of this theorem arises from the fact that $(KS)^-$ and $(KS)^+$ entropies are preserved by conjugacy (mod-0).
\end{proof}
%\begin{figure}[h]
%	\includegraphics[width=0.6\textwidth]{ex01.pdf}
%	\caption{Examples of finite-to-1 baker's transformation}
%	\label{fig:1}  
%\end{figure}
% 
As it is known, the Ornstein Theorem \cite{8},\cite{10} is not necessarily valid for non-invertible case. In \cite{1} the authors give a complete characterization of n-to-1 maps with $\#(Z)=1$ (i.e. one-sided Bernoulli transformations). The following Theorem can be seen as an application of the square entropy for the case of $n>1, \#(Z)>1$.
\begin{theorem}\label{thm:ztc}
	The uniform n-to-1 transformations with $\#(Z)>1$ are isomorphic if and only if they have the same square entropy.
\end{theorem}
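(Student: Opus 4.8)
The plan is to prove both implications, the forward one being immediate and the reverse one carrying the real content. Reading ``$n$-to-$1$'' as a common degree $n$ for the two maps, I would first record that, by the definition of $(m,l)$-Bernoulli type, each transformation $(X_i,f_i,\mu_i)$ ($i=1,2$) is isomorphic mod-$0$ to a uniform full zip shift $(\Sigma_i,\sigma_{\tau_i},\mu_i)$ on symbol sets $S_i,Z_i$ with $\#(S_i)=l_i$ and $\#(Z_i)=m_i$. Because the map is genuinely $n$-to-$1$ and carries the uniform measure, $\tau_i$ has all fibres of size $n$, so $l_i=n\,m_i$, and Proposition~\ref{prop:3.1} forces the induced distribution on $Z_i$ to be uniform as well. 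By Lemma~\ref{lem:zse} this gives $h^{+}_{\mu_i}(\sigma_{\tau_i})=\log l_i$ and $h^{-}_{\mu_i}(\sigma_{\tau_i})=\log m_i$, whence
$$h_{S,\mu_i}(\sigma_{\tau_i})=\sqrt{\log l_i\,\log m_i}=\sqrt{\log(n m_i)\,\log m_i}.$$
The forward implication is then immediate: if $f_1$ and $f_2$ are isomorphic, Proposition~\ref{prop:2} shows their square entropies agree.

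For the reverse implication I would exploit that, within this class, the square entropy is a faithful function of $m$. Setting $g(m)=\log(nm)\,\log m=\log n\,\log m+(\log m)^2$, the hypothesis $\#(Z_i)>1$ means $m_i\ge 2$, on which range $g'(m)=\tfrac{1}{m}(\log n+2\log m)>0$, so $g$ is strictly increasing. Hence equality of the square entropies, i.e. $g(m_1)=g(m_2)$, forces $m_1=m_2$ and therefore $l_1=l_2$. This is exactly where $\#(Z)>1$ is needed: for $m=1$ one has $h^{-}=0$ and the square entropy collapses to $0$ independently of $l$, so it could not separate the maps.

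The last step is the main obstacle, and I would handle it by a symbol-relabelling conjugacy rather than by gluing two abstract Ornstein isomorphisms, which in general are not coordinate-wise and so would not descend to the zip shift. Writing $(m,l):=(m_1,l_1)=(m_2,l_2)$, the maps $\tau_1,\tau_2$ are surjections with all fibres of size $n=l/m$; since a constant-fibre surjection onto an $m$-point set is determined up to relabelling by $m$ and $n$, there exist bijections $\alpha:S_1\to S_2$ and $\beta:Z_1\to Z_2$ with $\tau_2\circ\alpha=\beta\circ\tau_1$. Applying $\alpha$ to the coordinates $\ge 0$ and $\beta$ to the coordinates $<0$ defines a homeomorphism $\Phi:\Sigma_1\to\Sigma_2$ that intertwines $\sigma_{\tau_1}$ and $\sigma_{\tau_2}$ by \eqref{ZS} and sends the uniform measure to the uniform measure, hence is a mod-$0$ isomorphism. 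This is the promised $(m,l)$-Bernoulli analogue of Ornstein's theorem: the equal-entropy criterion on the left factor (a Bernoulli shift on $l$ symbols, $h=\log l$) and on the right factor (a Bernoulli shift on $m$ symbols, $h=\log m$) is here realised concretely by $\alpha$ and $\beta$. Composing $\Phi$ with the two given isomorphisms $\phi_i:X_i\to\Sigma_i$ yields the isomorphism $\phi_2^{-1}\circ\Phi\circ\phi_1$ between $f_1$ and $f_2$, completing the proof.
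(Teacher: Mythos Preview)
Your proof is correct and follows essentially the same line as the paper: both reduce the reverse implication to the injectivity of $m\mapsto\log(nm)\log m$ for $m>1$, concluding $m_1=m_2$ (hence $l_1=l_2$) from equality of the square entropies. Your argument is in fact more complete than the paper's, since you supply the explicit symbol-relabelling conjugacy $\Phi$ once the alphabet sizes agree and explain why the hypothesis $\#(Z)>1$ is needed, whereas the paper stops at $m_1=m_2$ and simply asserts the isomorphism.
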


\begin{proof}
	Using Proposition \ref{prop:2}, the square entropy is preserved by conjugacy (mod-0). Indeed it is enough to show that two transformations  of $(m,l)$-Bernoulli type,  with the same square entropy are isomorphic.\\
	$\Leftarrow)$
	Let $(f_1,\mu_1)$ and $(f_2,\mu_2)$ be uniform n-to-1 maps  of $(m,l)$-Bernoulli type. There exist zip shift maps  $(\sigma_1,\nu_1)$ with $(m_1,l_1)$ symbols and $(\sigma_2,\nu_2)$ with $(m_2,l_2)$ symbols associated respectively to $f_1$ and $f_2$. These maps have the square entropies, as follows. \begin{align*}
	h_{T,\mu_1}(f_1)=h_{T,\nu_1}(\sigma_1)=\sqrt{\log m_1\log l_1}.\\ h_{T,\mu_2}(f_2)=h_{T,\nu_2}(\sigma_2)=\sqrt{\log m_2 \log l_2}.
	\end{align*} 
	Notice that once we have a n-to-1 zip shift map, $\#(S)=n(\#(Z))$.
	%$0\leq\log a_i\leq \log m_i$ and $0\leq\log b_i\leq \log l_i$, for $i=1,2$ and 
	Indeed the map $\ln m\,\ln l$ with $m,l>1$ is injective. In other words, these entropies could be equal if and only if $m_1=m_2$. This shows that uniform n-to-1 maps  of $(m,l)$-Bernoulli type and equal square entropy are necessarily isomorphic.
\end{proof}
%In \cite{M} the topological partition and encoding of n-to-1 homeomorphisms with self similar domains are studied. Such class of examples which are measure theoretically conjugated with a zip shift map are examples of maps with LM-Bernoully property.

%\subsection{Statements and Declarations}

%Data sharing not applicable to this article as no data sets were generated or analyzed during the current study.\\
\subsection*{Conflict of Interest:} The authors declare that there is no conflict of interest.

\subsection*{Data Availability}
	Data sharing is not applicable to this article as no data sets were generated or analyzed during the current study.
	
\subsection*{Funding: }This research was partially funded by FAPEMIG-Brazil.
%\subsection*{Ethical approval:} Not applicable.
%\subsection*{Informed consent:} Informed consent was obtained from all participants prior to the study.
%\subsection*{Author Contributions:} The second author contributed equally to the content and writing of the paper.

%\bibliography{sn-bibliography}
%\input sn-article.bbl

\end{document}